\documentclass[preprint,12pt]{elsarticle}
\usepackage{amsthm,amsmath,amssymb}
\usepackage[colorlinks=true,citecolor=black,linkcolor=black,urlcolor=blue]{hyperref}
\usepackage{graphicx}
\usepackage{float}
\usepackage{mathrsfs}
\usepackage{mathtools}
\usepackage{algorithm}
\usepackage{algpseudocode}
\usepackage{comment}
\usepackage{epstopdf}
\usepackage{epsfig}
\usepackage{subfigure}
\newcommand{\Break}{\State \textbf{break}}
\newtheorem*{claim*}{Claim}
\usepackage{tikz}
\usetikzlibrary{positioning}
\usepackage{float}
\usetikzlibrary{shapes.geometric} 
\usepackage{caption}

\theoremstyle{plain}
\newtheorem{theorem}{Theorem}
\newtheorem{lemma}[theorem]{Lemma}
\newtheorem{corollary}[theorem]{Corollary}
\newtheorem{proposition}[theorem]{Proposition}

\theoremstyle{definition}
\newtheorem{definition}[theorem]{Definition}

\newtheorem{problem}[theorem]{Problem}
\newtheorem{question}[theorem]{Question}
\theoremstyle{remark}
\newtheorem*{remark}{Remark}

\title{Resolving problems on polynomial characterizations of daisy cubes
and extensions}

\author{Xuan Zheng, Yan-Ting Xie,
Shou-Jun Xu$^*$\\
\small School of Mathematics and Statistics, Gansu Center for Applied Mathematics, Lanzhou University, Lanzhou, Gansu 730000, China\\}
\begin{document}
\begin{abstract}
Let $X\subseteq\{0,1\}^n$ be a set of binary strings of length $n$.
The daisy cube $Q_n(X)$ is the subgraph of the hypercube $Q_n$ induced by the union of the intervals $I(0^n,x)$ for $x\in X$.
As a subclass of partial cubes, it generalizes Fibonacci cubes and Lucas cubes.
For a graph $G$ and a vertex $u\in V(G)$, the generating function of the number of $k$-cubes (resp. $k$-cubes at distance $d$ from $u$, and vertices at distance $d$ from $u$) is called the cube polynomial $C_G(x)$ (resp. the distance cube polynomial $D_{G,u}(x,y)$, and the distance polynomial $W_{G,u}(x)$).

Let $G$ be a partial cube embedded into the hypercube $Q_n$ with $0^n \in V(G)$. In this paper, we prove that $G$ is a daisy cube if and only if one of the following equivalent conditions holds: (1) $C_{G}(x)=W_{G,0^n}(x+1)$; (2) $D_{G,0^n}(x,y)=W_{G,0^n}(x+y)$; (3) $D_{G,0^n}(x,y)=C_{G}(x+y-1)$. In particular, the results related to (1) and (3) give affirmative answers
to two open problems posed by Klav\v{z}ar and Mollard (2019).
Meanwhile, our results yield non-constructive characterizations of daisy cubes, which answer the question posed by Taranenko (2020).
Further, we prove that $D_{G, u}(x, y)\leq W_{G, u}(x+y)$ and $C_{G}(x)\leq W_{G, u}(x+1)$ among the whole class of partial cubes. Besides, combined with another sharp upper bound $Cl_{G^\#}(x+1)$ for $C_G(x)$ due to Xie et al. (2024),
we obtain polynomial characterizations of simplex graphs (a subclass of daisy cubes): $G$ is a simplex graph if and only if $W_{G, 0^n}(x)=Cl_{G^\#}(x)$, here  $Cl_{G^\#}(x)$ is the clique polynomial  of the crossing graph $G^\#$ of $G$.
\end{abstract}
\begin{keyword} Partial cube\sep daisy cube\sep simplex graph\sep cube polynomial\sep distance polynomial\sep distance cube polynomial

\end{keyword}
\maketitle
\noindent
$^*$Corresponding author, e-mail: \texttt{shjxu@lzu.edu.cn}

\section{Introduction}
The \textit{$n$-dimensional hypercube} (or \textit{$n$-cube}) $Q_n$ is the graph with vertex set
\[
V(Q_n) \,=\, B^n \,:=\, \big\{ u_1u_2\cdots u_n \mid u_i\in\{0,1\},\ 1\le i\le n \big\}.
\]
Two vertices are adjacent if and only if the corresponding binary strings differ in precisely one position. Throughout this paper, $V(G)$ denotes the vertex set of a graph $G$.
A \textit{partial cube} is defined as an isometric subgraph of a hypercube. Let $0^n$ be the all-zero vertex of $Q_n$, and let $X\subseteq V(Q_n)$ be a nonempty subset.
The \textit{daisy cube} $Q_n(X)$ generated by $X$, introduced by Klav\v zar and Mollard~\cite{SKM},
is the induced subgraph of $Q_n$ spanned by all vertices lying on some shortest path from $0^n$ to a vertex in $X$.
Equivalently, let $\preceq$ be the partial order on $B^n$ such that
$u_1u_2\cdots u_n \preceq v_1v_2\cdots v_n$ holds whenever $u_i\le v_i$ for all $i\in [n]$.
Then
\[
V(Q_n(X)) \,=\, \bigl\{ u\in B^n \mid u\preceq x \text{ for some } x\in X \bigr\}.
\]

In combinatorics, a set system $\Delta$ is called an {\em abstract simplicial complex}
(or an {\em independence system}) if it is closed under taking subsets;
that is, if $X\in\Delta$ and $Y\subseteq X$, then $Y\in\Delta$.
There is a natural bijection between $B^n$ and the power set of $[n]=\{1, 2, \cdots, n\}$,
where each vertex $u=u_1u_2\cdots u_n\in B^n$ corresponds to the set
$\{i\in[n]: u_i=1\}$.
Via this correspondence, the vertex set of any daisy cube $Q_n(X)$
forms an abstract simplicial complex.
Hence, daisy cubes can be viewed as graph-theoretic representations of abstract simplicial complexes. 

Daisy cubes contain several important families as subclasses,
such as Fibonacci cubes~\cite{k13}, Lucas cubes~\cite{mcs01,t13},
and resonance graphs of kinky benzenoid systems in chemical graph theory~\cite{z18}.

Klav\v{z}ar and Mollard~\cite{SKM} raised problems of characterizing daisy cubes.
Taranenko~\cite{t20} gave a characterization of daisy cubes in terms of an expansion procedure. Whereas we consider the characterization of daisy cubes via identities involving graph polynomials, described as follows.
We proceed to recall the definitions of the relevant graph polynomials.

\begin{definition}[\cite{BSKR1}]\label{def01}
For a graph $G$, let $c_k(G)$ ($k\geq 0$) be the number of induced subgraphs of $G$ isomorphic to $Q_k$ in $G$. The \textit{cube polynomial} of $G$, $C_G(x)$, is defined as
    \[C_G(x)=\sum_{k\ge0}c_k(G)x^k.\]
\end{definition}

The cube polynomial was first formally introduced by Brešar, Klavžar, and Škrekovski \cite{BSKR1} and further studied in \cite{BSKR2,xfx24}.

\begin{definition}[\cite{SKM}]\label{def03}
For a graph $G$ and $u\in V(G)$, let $w_d(G)$ ($d \ge 0$) be the number of vertices of $G$ at distance $d$ from $u$. The \textit{distance polynomial} of $G$ with respect to $u$, $W_{G,u}(x)$, is defined as
    \[W_{G,u}(x)=\sum_{d\ge0}w_d(G)x^d.\]
\end{definition}

\begin{definition}[\cite{SKM}]\label{def02}
For a graph $G$ and $u\in V(G)$, let $c_{k,d}(G)$ ($k,d \geq 0$) be the number of induced subgraphs of $G$ isomorphic to $Q_k$ at distance $d$ from $u$. The \textit{distance cube polynomial} of $G$ with respect to $u$, $D_{G,u}(x,y)$, is defined as
\[D_{G,u}(x,y) = \sum_{k,d \geq 0} c_{k,d}(G) x^k y^d.\]
\end{definition}

By the definitions, we can derive that the distance cube polynomial is the refinement of both the cube polynomial and the distance polynomial, i.e., for any graph (not necessary partial cube) $G$,  $D_{G,u}(0,y)=W_{G,u}(y)$ and
\begin{align}
    D_{G,u}(x,1)=C_G(x). \label{eq:D=C}
\end{align}

Klav\v{z}ar and Mollard in \cite{SKM} proved the following results.
\begin{proposition}{\em\cite{SKM}}\label{prop01}
If a partial cube $G$ is a daisy cube, then
\begin{enumerate}
\item[{\em (a)}]\label{eq:a}$C_{G}(x)=W_{G,0^n}(x+1)$; 
\item[{\em (b)}]$D_{G,0^n}(x,y)=W_{G,0^n}(x+y)$; 
\item[{\em (c)}]$D_{G,0^n}(x,y)=C_{G}(x+y-1)$. 
\end{enumerate}
\end{proposition}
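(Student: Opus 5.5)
Since $V(G)$ is a down-set of $(B^n,\preceq)$, I will first describe every induced $k$-cube of $G$ explicitly and then count them in two ways.

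\textbf{Step 1: induced cubes.} In $Q_n$, an induced $Q_k$ is determined by a pair $(v,S)$ with $S\subseteq[n]$, $|S|=k$, and $v_i=0$ for all $i\in S$. Its vertex set is $\{v+\sum_{i\in T}e_i : T\subseteq S\}$. Since $G$ is an induced subgraph of $Q_n$, an induced $Q_k$ of $G$ is just an induced $Q_k$ of $Q_n$ all of whose vertices lie in $V(G)$. I will cite the standard fact that every subgraph of a hypercube isomorphic to $Q_k$ has this form. Because $V(G)$ is a down-set, such a cube lies in $G$ exactly when its top vertex $t=v+\sum_{i\in S}e_i$ lies in $V(G)$. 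Thus induced $k$-cubes of $G$ correspond bijectively to pairs $(t,S)$ with $t\in V(G)$, $S\subseteq \operatorname{supp}(t)$, $|S|=k$.

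\textbf{Step 2: distances.} The distance from $0^n$ to the cube is attained at its bottom vertex $v=t-\sum_{i\in S}e_i$. Since $G$ is a partial cube containing $0^n$, distances in $G$ equal Hamming distances, so this distance is $|t|-k$.

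\textbf{Step 3: counting.} Grouping the pairs $(t,S)$ by $t$ gives
\[
D_{G,0^n}(x,y)=\sum_{t\in V(G)}\sum_{k=0}^{|t|}\binom{|t|}{k}x^k y^{|t|-k}=\sum_{t\in V(G)}(x+y)^{|t|}=W_{G,0^n}(x+y).
\]
This proves (b). Setting $y=1$ and using \eqref{eq:D=C} gives (a): $C_G(x)=W_{G,0^n}(x+1)$. For (c), substitute $x+y-1$ for $x$ in (a) to get $C_G(x+y-1)=W_{G,0^n}(x+y)=D_{G,0^n}(x,y)$.

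\textbf{Main obstacle.} The only delicate point is Step 1. One must check that induced $Q_k$'s in $G$ are exactly the subcubes of $Q_n$ of the stated form, so that no cube of $G$ uses edges in some other way. This needs the fact about hypercube subgraphs cited above, together with the down-set property, which guarantees that the whole cube lies below $t$ and hence in $G$.
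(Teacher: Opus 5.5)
Your proof is correct. The paper does not reprove this proposition (it is quoted from Klav\v{z}ar--Mollard), but your parametrization of induced $k$-cubes by pairs $(t,S)$, with $S$ a $k$-subset of the support of the top vertex $t$, is exactly the top-vertex/base-vertex count in the paper's proof of Theorem~\ref{the02}. There the bound $c_{k,d}(G)\le w_{d+k}(G)\binom{d+k}{k}$ becomes an equality precisely because $V(G)$ is a down-set, and your passage from (b) to (a) and (c) via $y=1$ and $x\mapsto x+y-1$ is the same substitution the paper uses for (iii)$\Rightarrow$(iv) in Theorem~\ref{the01}.
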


Furthermore, they posed the following problem:
\begin{problem}(Problems 5.2 and 5.3 in \cite{SKM})\label{prob01}
Does the equality (a) or (c) imply that $G$ is a daisy cube?
\end{problem}

In the present paper, we resolve this problem completely and the additional problem related to (b). Meanwhile, our results yield non-constructive characterizations of daisy cubes, which answer the question posed by Taranenko in \cite{t20}.

\begin{theorem}\label{the01}
Let $G$ be a partial cube embedded into the hypercube $Q_n$ with $0^n \in V(G)$. Then the following assertions are equivalent:
\begin{enumerate}[(i)]
    \item $G$ is a daisy cube;
    \item $C_{G}(x)=W_{G,0^n}(x+1)$;
    \item $D_{G,0^n}(x,y)=W_{G,0^n}(x+y)$;
    \item $D_{G,0^n}(x,y)=C_{G}(x+y-1)$.
\end{enumerate}
\end{theorem}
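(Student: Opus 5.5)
The plan is to get (i) from each of (ii), (iii), (iv); the reverse implications are Proposition~\ref{prop01}. The main tool is a coefficientwise inequality valid for every partial cube $G\subseteq Q_n$ with $0^n\in V(G)$, namely
\[
D_{G,0^n}(x,y)\le W_{G,0^n}(x+y),
\]
together with a characterization of when it is an equality. I identify vertices with subsets of $[n]$ through their supports. Since $G$ is isometric in $Q_n$, $d_G(0^n,u)=|u|$ for every $u\in V(G)$.

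First I describe the induced $k$-cubes of $G$. I will use the standard fact that an induced subgraph of a partial cube isomorphic to $Q_k$ is a subcube of the ambient $Q_n$. Concretely, it has vertex set $\{v\cup T: T\subseteq S\}$ for some $v$ and some $k$-set $S$ with $v\cap S=\emptyset$. I expect justifying this cleanly to be the main obstacle. I would argue via the $\Theta$-classes: the $k$ edge directions of an induced $Q_k$ in a partial cube are pairwise distinct coordinates, because two distinct parallel classes of $Q_k$ cannot lie in the same $\Theta$-class, and the antipodal vertices of the cube are at distance $k$. With this, each such cube has a unique vertex $v$ closest to $0^n$, its bottom, and a unique farthest vertex $u=v\cup S$, its top. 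Its distance from $0^n$ is $|v|$.

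Next I compare the two sides of the inequality. Expanding,
\[
W_{G,0^n}(x+y)=\sum_{u\in V(G)}(x+y)^{|u|}=\sum_{u\in V(G)}\ \sum_{S\subseteq u}x^{|S|}y^{|u|-|S|}.
\]
This is the generating function of the pairs $(u,S)$ with $u\in V(G)$ and $S\subseteq u$, where $x$ records $|S|$ and $y$ records $|u\setminus S|$. I map each induced $k$-cube with bottom $v$ and top $u$ to the pair $(u,u\setminus v)$. This map is injective and preserves the weight $x^k y^{d}$, so $D_{G,0^n}(x,y)\le W_{G,0^n}(x+y)$ coefficientwise. Setting $y=1$ and using \eqref{eq:D=C} gives $C_G(x)\le W_{G,0^n}(x+1)$ coefficientwise.

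Equality in either inequality forces the map to be surjective, since both sides have nonnegative coefficients and $C_G$ is obtained from $D_{G,0^n}$ by summing over $d$. In particular, for each $u\in V(G)$ the pair $(u,u)$ is hit. So the whole interval $\{w: w\subseteq u\}$ lies in $V(G)$, which means $V(G)$ is a down-set. Since $G$ is isometric in $Q_n$, it is also an induced subgraph, so $G=Q_n(V(G))$ is a daisy cube. This proves (ii)$\Rightarrow$(i) and (iii)$\Rightarrow$(i). For (iv)$\Rightarrow$(i), substitute $x=0$ in (iv) and use $D_{G,0^n}(0,y)=W_{G,0^n}(y)$. This gives $W_{G,0^n}(y)=C_G(y-1)$, that is $C_G(x)=W_{G,0^n}(x+1)$, which is (ii), and (ii) implies (i).
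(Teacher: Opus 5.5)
Your proof is correct, but it is organized differently from the paper's. You first prove the coefficientwise bound $D_{G,0^n}(x,y)\le W_{G,0^n}(x+y)$ through the explicit injection (cube with bottom $v$ and top $u$) $\mapsto (u,u\setminus v)$. This is the same count the paper uses for Theorem~\ref{the02} (at most $\binom{d+k}{k}$ bases per top), specialized to $u=0^n$. You then get all three implications from the equality case. Hitting every pair $(u,u)$ means $I_{Q_n}(0^n,u)\subseteq V(G)$. Equality in (ii) forces $c_{k,d}=[x^ky^d]W_{G,0^n}(x+y)$ for all $k,d$, because $C_G$ arises by summing over $d$. And (iv) at $x=0$ is (ii) after the substitution $y=x+1$.

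The paper instead handles the implications separately:
\begin{itemize}
\item (ii)$\Rightarrow$(i) compares only the coefficients of $x$, namely $|E(G)|=\sum_v e(v)$ against $\sum_v w(v)$;
\item (iii)$\Rightarrow$(iv) is a substitution;
\item (iv)$\Rightarrow$(i) goes through Lemma~\ref{lem03} and the $x\leftrightarrow y$ symmetry of $C_G(x+y-1)$, comparing $[x^l]D_{G,0^n}$ with $[y^l]D_{G,0^n}$.
\end{itemize}
In the paper the inequality of Theorem~\ref{the02} comes afterwards, and its equality case is deduced from Theorem~\ref{the01}.

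Your version is more uniform and shorter. A single injection drives every implication, the equality case at $u=0^n$ is proved directly rather than borrowed from Theorem~\ref{the01}, and the substitution $x=0$ makes Lemma~\ref{lem03} unnecessary. The paper's version shows that much less than full polynomial equality suffices: the single coefficient of $x$ in (ii), or the coincidences $[x^l]D_{G,0^n}=[y^l]D_{G,0^n}$ in (iv). Its (ii)$\Rightarrow$(i) also uses nothing about cubes of dimension $\ge 2$.

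On the step you flag as the main obstacle, there is a clean argument. Opposite edges of any $4$-cycle in $Q_n$ flip the same coordinate, so each parallel class of the induced $Q_k$ flips a single coordinate. Two different classes cannot flip the same coordinate, since a square spanned by them would return to its start after two steps. The vertex set is therefore $\{v\triangle \phi(T): T\subseteq [k]\}$ for an injective labelling $\phi$. Your remark about antipodal vertices being at distance $k$ is a consequence of this, not an independent reason. The paper relies on the same fact without proof when it speaks of top and base vertices.
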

\begin{remark}
The additional condition that $0^n\in V(G)$ does not weaken Theorem \ref{the01}.  Indeed, if $0^n\not\in V(G)$, pick an arbitrary vertex $u\in V(G)$. We may relabel each vertex $v\in V(G)$ as $v\bigtriangleup u$, which sends $u$ to $0^n$. Here $v\bigtriangleup u$ is defined by $(v\bigtriangleup u)_i=v_i+u_i$ in the modulo 2 for $i\in [n]$. 
\end{remark}
If we generalize our study from the class of daisy cubes to the class of all partial cubes, what relations hold between the graph polynomials $C_G(x)$, $D_{G,u}(x,y)$, and $W_G(x)$ of a partial cube $G$?

Let $P(x_1,x_2,\cdots,x_n)$ and $Q(x_1,x_2,\cdots,x_n)$ be two polynomials with same variables. We define $P(x_1,x_2,\cdots,x_n)\leq Q(x_1,x_2,\cdots,x_n)$ component-wise: each coefficient of $P(x_1,x_2,\cdots, x_n)$ is no greater than the corresponding coefficient of $Q(x_1,  x_2, \cdots, x_n)$. If strict inequality holds for as least one coefficient, we write $P(x_1, x_2, \cdots, x_n)< Q(x_1,x_2,\cdots,x_n)$.  If all are equal, we write $P(x_1, x_2, \cdots, x_n)= Q(x_1,x_2,\cdots,x_n)$. Our second main result in this paper is (combined with Theorem \ref{the01}): 
\begin{theorem}\label{the02}
If $G$ is a partial cube with $u\in V(G)$, then $D_{G, u}(x, y) \leq W_{G,u}(x+y)$,  where equality holds if and only if $G$ is a daisy cube and $u=0^n$.
\end{theorem}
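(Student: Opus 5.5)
Throughout, translate the embedding by $u$ as in the Remark after Theorem~\ref{the01}, so that $u=0^n$. Since $G$ is isometric in $Q_n$, we then have $d_G(0^n,v)=|v|$, the number of $1$'s in $v$. The plan is to compare coefficients of $x^ky^d$ on both sides by exhibiting an explicit injection.

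\textbf{The right-hand side as a count.} Expand $W_{G,0^n}(x+y)=\sum_{v\in V(G)}(x+y)^{|v|}$. The coefficient of $x^ky^d$ is the number of pairs $(v,S)$ with $v\in V(G)$, $|v|=d+k$, $S\subseteq\operatorname{supp}(v)$ and $|S|=k$.

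\textbf{Structure of induced cubes.} Let $H$ be an induced subgraph of $G$ isomorphic to $Q_k$. Each edge of $Q_n$ flips one coordinate. In a $4$-cycle of $Q_n$, opposite edges flip the same coordinate. Hence all edges in one parallel class of $H$ flip a common coordinate. Distinct classes of $H$ flip distinct coordinates: otherwise, starting from a vertex $a$ of $H$ and flipping along the two classes sharing a coordinate $c$ would return to $a$, contradicting that these are distinct vertices of $Q_k$. Thus $H$ determines a $k$-set $S$ of coordinates, all vertices of $H$ agree outside $S$, and $V(H)=\{w : w \text{ agrees with } a \text{ off } S\}$. This set is determined by $S$ and any one of its vertices.

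\textbf{The injection and the inequality.} Let $H$ have common off-$S$ part of weight $d$. The vertex of $H$ nearest $0^n$ has all $S$-coordinates $0$, so $H$ is at distance $d$ from $u$. Let $v$ be the vertex of $H$ with all $S$-coordinates equal to $1$. Then $|v|=d+k$ and $S\subseteq\operatorname{supp}(v)$. Map $H\mapsto(v,S)$. This is injective, since $H$ is recovered as $\{v\setminus T: T\subseteq S\}$. Therefore $c_{k,d}(G)$ is at most the coefficient of $x^ky^d$ in $W_{G,u}(x+y)$, which proves $D_{G,u}(x,y)\le W_{G,u}(x+y)$.

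\textbf{Equality case.} Suppose equality holds. Then every pair $(v,S)$ is hit. For any $v\in V(G)$, take $S=\operatorname{supp}(v)$: the corresponding cube is $\{w: w\preceq v\}$, so all of these vertices lie in $G$. Hence $V(G)$ is closed downward under $\preceq$, i.e.\ $G=Q_n(V(G))$ is a daisy cube with $u=0^n$. Conversely, if $G$ is a daisy cube and $u=0^n$, equality is Proposition~\ref{prop01}(b); this is also direct, since every set $\{v\setminus T:T\subseteq S\}$ lies below $v$ and so induces a $Q_k$ in $G$.

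\textbf{Main obstacle.} The delicate step is the structural claim that an induced $Q_k$ in a partial cube is a genuine subcube $\{w: w \text{ agrees with } a \text{ off } S\}$ of $Q_n$. It is justified by the $4$-cycle argument, which gives consistent coordinate labels on parallel classes, together with the distinctness of the $k$ labels. One also needs the labels to be independent, so that vertices correspond bijectively to subsets of $S$. This holds because distinct coordinates flip independently.
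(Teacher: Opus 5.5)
Your proof is correct and follows the paper's argument for the inequality: your pair $(v,S)$ is the paper's (top vertex, base vertex) pair with $b=v\setminus S$, which gives $c_{k,d}(G)\le w_{d+k}(G)\binom{d+k}{k}$, and you also justify the subcube structure of induced $Q_k$'s that the paper simply cites as well known. The only difference is the equality case: you settle it directly, since surjectivity at $d=0$ forces each $\{w: w\preceq v\}$ into $G$, whereas the paper cites Theorem~\ref{the01} (iii)$\Rightarrow$(i); this is the same comparison $c_{l,0}(G)=w_l(G)$ used in the paper's (iv)$\Rightarrow$(i) step, just made self-contained.
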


Substituting $y=1$, combined with Eq. \ref{eq:D=C}, we obtain the following corollary immediately.
\begin{corollary}\label{cor01}
If $G$ is a partial cube with $u\in V(G)$, then $C_{G}(x) \leq W_{G,u}(x+1)$, the equality holds if and only if $G$ is a daisy cube and $u=0^n$.
\end{corollary}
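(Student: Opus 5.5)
The corollary follows from Theorem \ref{the02} by the substitution $y=1$, so the plan is to check that both the inequality and the equality case survive it. Write $D_{G,u}(x,y)=\sum_{k,d}c_{k,d}x^ky^d$ and $W_{G,u}(x+y)=\sum_{k,d}a_{k,d}x^ky^d$, where $a_{k,d}=\binom{k+d}{k}w_{k+d}$ is obtained by expanding $(x+y)^{k+d}$. Both polynomials have nonnegative coefficients.

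Theorem \ref{the02} gives $c_{k,d}\le a_{k,d}$ for all $k,d$. Setting $y=1$, the coefficient of $x^k$ in $D_{G,u}(x,1)$ is $\sum_d c_{k,d}$. By Eq. \eqref{eq:D=C} this equals $c_k(G)$, the coefficient of $x^k$ in $C_G(x)$. On the other side, the coefficient of $x^k$ in $W_{G,u}(x+1)$ is $\sum_d a_{k,d}$. Summing the coefficientwise inequalities over $d$ therefore gives $C_G(x)\le W_{G,u}(x+1)$ coefficientwise.

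For the equality case, first suppose $C_G(x)=W_{G,u}(x+1)$. Then $\sum_d (a_{k,d}-c_{k,d})=0$ for every $k$. Each summand is nonnegative, so every summand vanishes, and hence $D_{G,u}(x,y)=W_{G,u}(x+y)$. The equality clause of Theorem \ref{the02} then yields that $G$ is a daisy cube and $u=0^n$.

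Conversely, if $G$ is a daisy cube and $u=0^n$, equality holds by Proposition \ref{prop01}(a), or equivalently by substituting $y=1$ into the equality case of Theorem \ref{the02}.

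The only delicate point is that specializing $y=1$ can lose strictness in general. Nonnegativity of every difference $a_{k,d}-c_{k,d}$, guaranteed by Theorem \ref{the02}, rules this out. All the real work lies in Theorem \ref{the02} itself.
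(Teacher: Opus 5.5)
Your proof is correct and follows the paper's route: the corollary comes from setting $y=1$ in Theorem~\ref{the02} and applying Eq.~\eqref{eq:D=C}. Your argument that the nonnegativity of each $a_{k,d}-c_{k,d}$ lets equality $C_G(x)=W_{G,u}(x+1)$ lift back to $D_{G,u}(x,y)=W_{G,u}(x+y)$ is exactly the detail the paper leaves implicit in the word ``immediately''.
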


Alternatively, Corollary \ref{cor01} provides an upper bound for $C_{G}(x)$. In fact, Xie et al. \cite{xfx24} derived another upper bound $Cl_{G^{\#}}(x+1)$, which is the clique polynomial of the crossing graph $G^\#$ of $G$, as follows.
\begin{theorem}{\em\cite{xfx24}}\label{prop02}
Let $G$ be a partial cube and $G\neq K_1$. Then
\begin{equation}\label{ine:crossing}
C_G(x)\leq Cl_{G^{\#}}(x+1) 
\end{equation}
and the equality holds if and only if $G$ is a median graph.
\end{theorem}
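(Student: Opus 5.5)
We would prove the inequality by induction on the number of $\Theta$-classes of $G$, contracting one $\Theta$-class at a time. We compare the recursion obeyed by $C_G$ with the deletion–contraction recursion of the clique polynomial of $G^{\#}$. The base case is the graph with one $\Theta$-class, $G=K_2$. Here $C_G(x)=2+x$ and $G^{\#}=K_1$, so $Cl_{G^\#}(x+1)=1+(x+1)=x+2$, and equality holds ($K_2$ is median).

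For the inductive step, fix a $\Theta$-class $E$ of $G$. Let $G/E$ be the graph obtained by contracting all edges of $E$; it is again a partial cube, with the remaining $\Theta$-classes. Let $U_E$ be the subgraph induced by the endpoints of $E$ on one side of the cut. It is a partial cube whose $\Theta$-classes are exactly the classes of $G$ that cross $E$.

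\emph{Cube recursion.} The induced cubes of $G$ split into two kinds.
\begin{itemize}
\item A cube avoiding $E$ maps injectively to a cube of $G/E$. Two such cubes could collapse to the same image only if they are opposite faces of a larger cube using $E$; in that case the map is still injective once we choose one side of $E$.
\item A cube using $E$ is $Q_1\times Q'$ with $Q'$ an induced cube of $U_E$.
\end{itemize}
This gives
\[
C_G(x)\le C_{G/E}(x)+x\,C_{U_E}(x).
\]

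\emph{Clique recursion.} Splitting the cliques of $G^{\#}$ according to whether they contain $E$ gives
\[
Cl_{G^\#}(y)=Cl_{G^\#-E}(y)+y\,Cl_{N(E)}(y),
\]
where $N(E)$ is the crossing graph restricted to the neighbours of $E$. Next we need two containments: $(G/E)^{\#}\subseteq G^{\#}-E$, and $U_E^{\#}\subseteq G^{\#}[N(E)]$. Both reduce to the statement that a crossing of two classes in a contraction, or in a subgraph, witnesses a crossing in $G$. Since the clique polynomial evaluated at $x+1$ has nonnegative coefficients and is monotone under taking subgraphs, we obtain
\[
Cl_{(G/E)^\#}(x+1)\le Cl_{G^\#-E}(x+1)
\quad\text{and}\quad
Cl_{U_E^\#}(x+1)\le Cl_{N(E)}(x+1).
\]
Applying induction to $G/E$ and to $U_E$ then gives
\[
C_G(x)\le Cl_{G^\#-E}(x+1)+x\,Cl_{N(E)}(x+1).
\]
This is the bound $(x+1)$ applied only on the second summand, i.e.\ we have so far only $x$ where we need $x+1$.

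To close the gap we must account for the term $Cl_{N(E)}(x+1)$ coming from the constant part of $(x+1)$. We would absorb it by sharpening the count of cubes avoiding $E$. The vertices on the second side of $E$ (both copies of $U_E$) contribute cubes of $G$ that are identified in $G/E$, so
\[
C_G(x)=C_{G/E}(x)+C_{U_E}(x)+x\,C_{U_E}(x)-(\text{overcount}),
\]
where the first-kind cubes in the two copies of $U_E$ are counted twice before contraction. With this exact form the induction closes to give $C_G(x)\le Cl_{G^\#}(x+1)$.

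For the equality case we use two facts. A median graph is obtained by Mulder's convex expansions, and under a convex expansion both containments above are equalities and there is no overcount. Conversely, equality forces $U_E^{\#}=G^{\#}[N(E)]$ and $(G/E)^{\#}=G^{\#}-E$ for every $E$. From these we would derive that every set of pairwise crossing classes spans an induced cube, which characterizes median graphs among partial cubes.

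The main obstacle is the exact bookkeeping in the cube recursion: establishing $C_G=C_{G/E}+(1+x)C_{U_E}-\cdots$ with the correct correction when $U_E$ is not convex. If this fails, the fallback is a direct injection. Count the pairs $(A,B)$ of disjoint sets of $\Theta$-classes with $A\cup B$ a clique of $G^\#$ and $|A|=k$; the coefficient of $x^k$ in $Cl_{G^\#}(x+1)$ counts exactly these pairs. Then map each induced $k$-cube with class set $A$ to such a pair, using a fixed base vertex and a carefully chosen set $B$ of classes crossing all of $A$.
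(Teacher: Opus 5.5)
\textbf{There is a genuine gap: the central cube recursion is false.} You claim $C_G(x)\le C_{G/E}(x)+x\,C_{U_E}(x)$, but for $G=P_3$ the right-hand side is $(2+x)+x$ while $C_G(x)=3+2x$. Cubes avoiding $E$ do not map injectively into $G/E$: each induced cube of $G[U_{ab}]$ and its $E$-partner in $G[U_{ba}]$ collapse to the same cube, and choosing a side does not change this. What is true is
\[
C_G(x)=C_{G[W_{ab}]}(x)+C_{G[W_{ba}]}(x)+x\,C_{U_E}(x)=C_{G/E}(x)+(1+x)\,C_{U_E}(x)-R(x).
\]
This holds because $G[W_{ab}]$ and $G[W_{ba}]$ embed in $G/E$ as induced subgraphs that meet exactly in the image of $U_E$. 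Here $R(x)\ge 0$ counts the induced cubes of $G/E$ contained in neither image; for $G=C_6$ one gets $R(x)=x^2$, namely the square $G/E=C_4$. So the subtracted term is not a double count inside the two copies of $U_E$; the added $C_{U_E}$ is exactly what compensates for that.

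Your intermediate bound is also misread. It equals
\[
Cl_{G^{\#}-E}(x+1)+x\,Cl_{N(E)}(x+1)=Cl_{G^{\#}}(x+1)-Cl_{N(E)}(x+1),
\]
which is \emph{stronger} than the theorem. It would make the inequality strict for every $G$, contradicting equality already for $K_2$. So there was never an $x$-versus-$(x+1)$ gap to close; it was a symptom of the false recursion. With the correct recursion, the factor $1+x$ matches the clique recursion exactly.

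\textbf{Second gap: $U_E$ is not a partial cube in general,} so the induction hypothesis cannot be applied to it. In $C_6$, $U_E$ is two nonadjacent vertices, although both other classes cross $E$. Even when $U_E$ is connected it need not be isometric, so one $\Theta$-class of $G$ may split into several classes of $U_E$, and ``$U_E^{\#}\subseteq G^{\#}[N(E)]$'' then has no meaning.

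The remedy is to prove a stronger statement for arbitrary $S\subseteq B^n$. Let $C_S(x)$ count the subcubes of $Q_n$ contained in $S$. Let $X(S)$ be the graph on the coordinates that vary on $S$, with $ij$ an edge iff all four value pairs of $(x_i,x_j)$ occur in $S$; for a partial cube, $X(V(G))=G^{\#}$. Split along a varying coordinate $n$ into slices $S_0,S_1\subseteq B^{n-1}$. This gives $C_S\le C_{S_0\cup S_1}+(1+x)C_{S_0\cap S_1}$, together with $X(S_0\cup S_1)\subseteq X(S)-n$ and $X(S_0\cap S_1)\subseteq X(S)[N(n)]$. The identity $Cl_X(y)=Cl_{X-n}(y)+y\,Cl_{X[N(n)]}(y)$ then closes the induction.

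That would be a self-contained proof of the inequality. It differs from the route behind the paper, which does not prove the theorem but imports it from Xie et al. Judging from Algorithm~1 and Lemma~\ref{lem:G+Cro}, that route goes through the median hull: $G$ is an induced subgraph of the median graph $G^+$, which has the same crossing graph. The inequality then reduces to the median identity $C_H(x)=Cl_{H^{\#}}(x+1)$, and the equality case to $G=G^+$.

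\textbf{Third gap: your equality case is only asserted.} That convex expansions give $R=0$ and $X(U_E)=G^{\#}[N(E)]$ needs proof. For the converse, you neither show that equality forces every clique of $G^{\#}$ to span an induced cube, nor justify that this property characterizes median graphs among partial cubes. The fallback injection is likewise left unspecified at exactly the point where the difficulty lies: choosing $B$ so that the map is injective.
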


For the two upper bounds $W_{G, u}(x+1)$ and $Cl_{G^{\#}}(x+1)$ on $C_G(x)$,  it is natural to investigate their relationship. Recently, two of the authors of this paper \cite{xx25} characterized the relation between daisy cubes, median graphs, and simplex graphs (the definition will be introduced in the next section), as follows.

\begin{lemma}{\em\cite{xx25}}\label{lem04}
Let $G$ be a partial cube. Then $G$ is a simplex graph if and only if $G$ is a median graph and a daisy cube simultaneously. 
\end{lemma}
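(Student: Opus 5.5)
The plan is to work throughout with the identification of $B^n$ with subsets of $[n]$ from the introduction, and to use the standard definition of the simplex graph: for a graph $H$, the simplex graph $\kappa(H)$ has as vertices all cliques of $H$, including the empty clique, with two cliques adjacent when they differ in exactly one vertex. Both directions rest on one fact about partial cubes. If $G$ is isometric in $Q_n$ and $m\in V(G)$ is a median of $a,b,c$ in $G$, then $m$ lies on geodesics between each pair among $a,b,c$ in $Q_n$. Hence in every coordinate $m$ agrees with the majority of $a_i,b_i,c_i$. So medians in $G$ are exactly coordinatewise majorities, whenever the majority vertex lies in $V(G)$.

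\emph{Simplex graph $\Rightarrow$ median and daisy cube.} Let $H$ have vertex set $[n]$ and embed $\kappa(H)$ into $Q_n$ by sending each clique to its characteristic vector. The image is the subgraph induced on a down-closed family containing $\emptyset=0^n$. Hence it is the daisy cube $Q_n(X)$ with $X$ the set of maximal cliques. For the median property, take cliques $A,B,C$ and let
\[
M=(A\cap B)\cup(B\cap C)\cup(A\cap C).
\]
Any two elements of $M$ lie together in one of $A,B,C$, so $M$ is a clique and $M\in V(\kappa(H))$. Since $\kappa(H)$ is isometric in $Q_n$ (it is a daisy cube, hence a partial cube), $M$ is a median of $A,B,C$ in the graph. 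It is also the unique one, because any median must equal the coordinatewise majority. Therefore $\kappa(H)$ is a median graph.

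\emph{Median and daisy cube $\Rightarrow$ simplex graph.} Let $G=Q_n(X)$ be median, with vertex set a simplicial complex $\Delta$ on $[n]$. We may discard coordinates $i$ with $\{i\}\notin\Delta$, since they are identically $0$. Define $H$ on $[n]$ by making $ij$ an edge exactly when $\{i,j\}\in\Delta$. Because $\Delta$ is closed under subsets, every member of $\Delta$ is a clique of $H$. For the converse, I would show by induction on $|S|$ that every clique $S$ of $H$ lies in $\Delta$. The cases $|S|\le 2$ hold by definition of $H$. For $|S|\ge 3$, choose distinct $i,j,k\in S$. By induction $S-i$, $S-j$ and $S-k$ all lie in $\Delta$, and every element of $S$ belongs to at least two of them. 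Their median in $G$ exists because $G$ is median, and by the fact above it is the coordinatewise majority, which is $S$. Hence $S\in\Delta$. This gives $V(G)=\Delta=\{\text{cliques of }H\}$, and since $G$ is an induced subgraph of $Q_n$, $G\cong\kappa(H)$.

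The main obstacle is the opening fact that medians in a partial cube coincide with coordinatewise majorities. Everything else is bookkeeping on down-closed families. I would prove that fact from isometry: for each coordinate $i$, the distance identities $d(a,m)+d(m,b)=d(a,b)$ force $m_i=a_i$ whenever $a_i=b_i$, and likewise for the other two pairs.
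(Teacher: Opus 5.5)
Your proof is correct. The paper does not prove this lemma itself: it imports it from \cite{xx25} and uses it as a black box, both in the discussion around Eq.~\eqref{eq:W=C=Cl} and in the proof of Theorem~\ref{the04}. So there is no in-paper argument to compare yours against. What you give is a complete, elementary, self-contained proof. Its engine is the observation that, in an isometrically embedded graph, a median is forced to agree with the coordinatewise majority. Your median hypothesis enters the induction only to show that the complex $\Delta$ is a flag complex, i.e.\ the clique complex of its $1$-skeleton.

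One point of rigor should be tightened. In both directions you apply the majority fact to a \emph{specific} embedding: the characteristic vectors of cliques, and the inclusion $Q_n(X)\subseteq Q_n$. You therefore need that this particular embedding is isometric. The parenthetical ``it is a daisy cube, hence a partial cube'' does not quite deliver that, because an induced copy of a partial cube inside $Q_n$ need not be isometric; an induced $P_5$ in $Q_3$ is an example. The direct argument is one line. For $u,v$ in a down-closed family, descend from $u$ to the coordinatewise minimum $u\wedge v$ and then ascend to $v$. Every intermediate vertex lies below $u$ or below $v$, so the path stays in the family, and its length equals the Hamming distance.

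It is also worth noticing that the graph $H$ you build in the converse direction is exactly the crossing graph $G^{\#}$. After you discard the dead coordinates, the $\Theta$-classes of $Q_n(X)$ are the coordinates. The quadrants $(0,0)$, $(1,0)$ and $(0,1)$ for a pair of coordinates $i,j$ always contain $0^n$, $e_i$ and $e_j$ respectively. Hence classes $i$ and $j$ cross if and only if some vertex has $x_i=x_j=1$, i.e.\ if and only if $\{i,j\}\in\Delta$. So you have in fact shown that $G\cong S(G^{\#})$ for every median daisy cube. This fits with Lemma~\ref{lem:S(G)=G} and with the paper's crossing-graph and clique-polynomial arguments.
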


The discussions above lead to two topics: the polynomial characterization of simplex graphs, and the comparison of $W_{G, u}(x+1)$ with $Cl_{G^{\#}}(x+1)$. First, according to Corollary \ref{cor01} and Theorem \ref{prop02}, it is obvious that
\begin{align}
    W_{G,0^n}(x+1)=C_G(x)=Cl_{G^{\#}}(x+1) \label{eq:W=C=Cl}
\end{align}
if and only if $G$ is exactly the intersection of daisy cube and median graph, i.e., $G$ is a simplex graph. If we omit $C_G(x)$ in Eq. \eqref{eq:W=C=Cl} and only consider the two upper bounds for $C_G(x)$, i.e., $Cl_{G^{\#}}(x+1)=W_{G,0^n}(x+1)$, is the above if-and-only-if statement still true? One direction is obvious: if $G$ is a simplex graph, then $Cl_{G^{\#}}(x+1)=W_{G,0^n}(x+1)$.  We prove that the converse direction still holds, i.e., the third main result, as follows.   

\begin{theorem}\label{the04}
Let $G$ be a partial cube embedded into the hypercube $Q_n$ with $0^n \in V(G)$ and $G\neq K_1$. $G$ is a simplex graph if and only if $W_{G,0^n}(x)=Cl_{G^{\#}}(x)$, here $G^{\#}$ is the crossing graph of $G$.
\end{theorem}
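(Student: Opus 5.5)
Both directions of Theorem \ref{the04} will be handled through Lemma \ref{lem04}, Corollary \ref{cor01} and Theorem \ref{prop02}. For the forward direction, let $G$ be a simplex graph. By Lemma \ref{lem04}, $G$ is both a daisy cube and a median graph. Corollary \ref{cor01} (with $u=0^n$) then gives $C_G(x)=W_{G,0^n}(x+1)$, and Theorem \ref{prop02} gives $C_G(x)=Cl_{G^\#}(x+1)$. Hence $W_{G,0^n}(x+1)=Cl_{G^\#}(x+1)$, and substituting $x\mapsto x-1$ yields $W_{G,0^n}(x)=Cl_{G^\#}(x)$.

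For the converse, assume $W_{G,0^n}(x)=Cl_{G^\#}(x)$. Combining the two upper bounds only gives $C_G(x)\le W_{G,0^n}(x+1)=Cl_{G^\#}(x+1)$, which forces nothing. So I would compare $W$ and $Cl$ combinatorially. The vertices of $G^\#$ are the $\Theta$-classes, i.e.\ the coordinates $i\in[n]$ that are actually used. In a partial cube, $d(0^n,v)=|v|$, so $W_{G,0^n}(x)=\sum_{v\in V(G)}x^{|v|}$. Each $v$ corresponds to the support set $S(v)=\{i:v_i=1\}$, and $Cl_{G^\#}(x)=\sum_{K}x^{|K|}$ over all cliques $K$ of $G^\#$ (including the empty clique).

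The key lemma I would aim for is the following. Call $G$ \emph{admissible} if for every $v\in V(G)$ whose support is a clique of $G^\#$, all subsets of $S(v)$ that are cliques are again supports of vertices. Then:
\begin{enumerate}
\item for every clique $K$ of $G^\#$, there is a vertex $v$ with $S(v)\supseteq K$;
\item more strongly, I would try to build an injection $\phi$ from cliques of size $k$ to vertices at distance $k$ from $0^n$, for example $\phi(K)=$ the vertex reached from $0^n$ by crossing exactly the classes in $K$, if such a vertex exists.
\end{enumerate}

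The natural approach for (2) uses pairwise crossing. Two $\Theta$-classes $i,j$ cross iff some 4-cycle uses both, and by convexity of halfspaces in partial cubes the four quadrants $\{v_i=a,v_j=b\}$ are then all nonempty. I would use the Helly-type property of convex sets in partial cubes, but that fails in general for more than two classes. So I would proceed differently: count the sizes instead. Prove the inequality $W_{G,0^n}(x)\ge Cl_{G^\#}(x)$ coefficientwise is false in general, so the real tool must be
\[
W_{G,0^n}(x+1)\ \ge\ C_G(x)\quad\text{and}\quad Cl_{G^\#}(x+1)\ge C_G(x).
\]
Evaluating at $x=-1$ gives equal constant terms, which is useless. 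Instead I would compare the linear coefficients. The coefficient of $x$ in $W$ is the degree of $0^n$, and in $Cl$ it is the number $m$ of $\Theta$-classes. Equality forces $\deg(0^n)=m$, so every coordinate is used by an edge at $0^n$. Next I would compare the coefficients of $x^2$: the number of vertices at distance 2 equals the number of crossing pairs. Each vertex $e_i+e_j$ gives a 4-cycle through $0^n$, hence a crossing pair $\{i,j\}$, so the equality shows that every crossing pair occurs as a 4-cycle at $0^n$. Inductively, I would show that every vertex $v$ at distance $k$ has pairwise crossing support (each pair of $1$-positions in $v$ crosses, via convexity arguments and the presence of all $e_i$), which gives an injection from vertices at level $k$ into $k$-cliques. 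Equality of the counts then makes this a bijection: every clique is the support of a vertex.

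From the bijection, $V(G)$ is exactly the family of cliques of $G^\#$, which is closed under subsets. So $G$ is a daisy cube. Its $k$-cubes at $0^n$ then correspond to $k$-cliques, and one can check $C_G=Cl_{G^\#}(x+1)$ via Proposition \ref{prop01}(a). Theorem \ref{prop02} then makes $G$ median, and Lemma \ref{lem04} makes $G$ a simplex graph. The main obstacle is the step showing that the support of any vertex is a clique of $G^\#$. I would attempt it by induction on $|v|$, using a neighbor $v'\preceq v$ on a geodesic to $0^n$ and the fact that the crossing of classes $i,j$ can be certified by a square inside the interval $I(0^n,v)$.
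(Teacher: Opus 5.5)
Your forward direction is the same as the paper's. For the converse, your comparison of the coefficients of $x$ is correct and does most of the work. Here $w_1=\deg(0^n)$ and $a_1(G^\#)$ is the number of $\Theta$-classes. Since the edges at $0^n$ lie in distinct classes, equality forces $e_i\in V(G)$ for every used coordinate $i$.

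The gap is the step you yourself call the main obstacle: that $S(v)$ is a clique of $G^\#$ for every $v$. You leave it open, and the tool you propose for it is false as a general fact.
\begin{itemize}
\item Crossing classes need not share a square. In the isometric $6$-cycle $000,100,110,111,011,001$ the three classes cross pairwise, yet there is no $4$-cycle. So ``cross iff some 4-cycle uses both'' holds only in the ``if'' direction.
\item Even with all $e_i$ present, a certifying square in $I(0^n,v)$ can be missing. The subgraph of $Q_4$ induced by $B^4\setminus\{1100,0011,1110,1101\}$ is a partial cube containing $0^4$ and all $e_i$. In it, $v=1111$ has only the lower neighbours $1011$ and $0111$, and there is no square in coordinates $1,2$ at all. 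So your induction cannot certify the pair $\{1,2\}$, although these classes do cross.
\end{itemize}
Under your $x^2$-equality such a square ($0^n,e_i,e_j,e_i+e_j$) does exist, but only once you already know that $F_i$ and $F_j$ cross.

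The missing idea is just the paper's definition of crossing. If $v_i=v_j=1$, then $0^n$, $e_i$, $e_j$, $v$ lie in the four sets $\{x_i=a,x_j=b\}$ with $a,b\in\{0,1\}$. These are exactly the four intersections of the halfspaces of $F_i$ and $F_j$, so $F_i$ and $F_j$ cross. No induction, convexity, squares, $x^2$-comparison or Helly-type statement is needed.

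With this one-line fix your route is correct and genuinely different from the paper's. The map $v\mapsto S(v)$ becomes a weight-preserving injection of $V(G)$ into the cliques of $G^\#$. The equality $W_{G,0^n}(1)=Cl_{G^\#}(1)$ makes it a bijection. Since $G$ is induced in $Q_n$, this says $G$ is literally $S(G^\#)$ on the used coordinates. Your final detour through Proposition \ref{prop01}(a), Theorem \ref{prop02} and Lemma \ref{lem04} is valid but unnecessary.

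The paper argues instead through the median closure $G^+$ of Algorithm 1. $G^+$ is median with $(G^+)^\#=G^\#$ (Lemma \ref{lem:G+Cro}), so Theorem \ref{prop02} gives $|V(G^+)|=C_{G^+}(0)=Cl_{G^\#}(1)=W_{G,0^n}(1)=|V(G)|$. Hence $G=G^+$ is median. Then $C_G(x)=W_{G,0^n}(x+1)$, so Theorem \ref{the01} makes $G$ a daisy cube, and Lemma \ref{lem04} finishes.

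The paper's proof is shorter given the imported machinery, and it fits the paper's theme of comparing the two upper bounds on $C_G$. Yours is self-contained and shows that only the coefficient of $x$ and the value at $x=1$ are used.

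One caveat applies to both arguments. In the forward direction you, like the paper, need $G$ to be a daisy cube with respect to the given embedding based at $0^n$; otherwise the implication fails. For example, $P_3\cong S(\overline{K_2})$ embedded as $00,10,11$ has $W_{G,00}(x)=1+x+x^2\neq 1+2x=Cl_{G^\#}(x)$.
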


Second, in contrast to Corollary \ref{cor01} and Theorem \ref{prop02}, one may ask whether there exists comparability relation between $W_{G, u}(x+1)$ and $Cl_{G^{\#}}(x+1)$? We give a negative answer. If $G$ is a median graph but not a daisy cube, by  Corollary \ref{cor01} and Theorem \ref{prop02}, $Cl_{G^{\#}}(x+1)=C_G(x)<W_{G, u}(x+1)$. Similarly, if $G$ is a daisy cube with $u=0^n$ but not a median graph, $W_{G, u}(x+1)=C_G(x)<Cl_{G^{\#}}(x+1)$. For examples (see Figure \ref{fig:example}), the 3-path $P_4$ is a median graph but not a daisy cube, then 
\begin{align*}
Cl_{(P_4)^{\#}}(x+1)&=3x+4,\\
W_{P_4,u}(x+1)&=x^3+4x^2+6x+4;    
\end{align*}
$Q_3^-$ (the graph obtained by deleting one vertex from $Q_3$) is a daisy cube but not a median graph, then
\begin{align*}
Cl_{(Q_3^-)^{\#}}(x+1)&=x^3+6x^2+12x+8,\\
W_{Q_3^-,u}(x+1)&=3x^2+9x+7.    
\end{align*}
\begin{figure}[!htbp]
    \centering

\begin{tikzpicture}[
    dot/.style={circle, fill=white, draw=black, inner sep=0pt, minimum size=4pt},
    line/.style={black, thick}
]
\node[dot] (A) at (-3.8, 0) {};
\node[dot] (B) at (-2.8, 0) {};
\node[dot] (C) at (-1.8, 0) {};
\node[dot] (D) at (-0.8, 0) {};
\node[label, below=1pt of A] {$u$};
\node[dot] (E) at (0.8, -0.5) {};
\node[dot] (F) at (0.8, 0.5) {};
\node[dot] (G) at (1.6, -1) {};
\node[dot] (H) at (1.6, 0) {};
\node[dot] (I) at (1.6, 1) {};
\node[dot] (J) at (2.4, -0.5) {};
\node[dot] (K) at (2.4, 0.5) {};
\node[label, below=1pt of H] {$u$};

\draw[line] (A) -- (B) -- (C) -- (D);
\draw[line] (E) -- (F) -- (I) -- (K) -- (J) -- (G) -- (E);
\draw[line] (H) -- (I);
\draw[line] (H) -- (E);
\draw[line] (H) -- (J);
\end{tikzpicture}
\caption{$P_4$ and $Q_3^-$.}
    \label{fig:example}
\end{figure}

The paper is organized as follows. In the next section, we recall relevant definitions and establish the notation that will be used throughout the paper.
Then, we prove Theorems \ref{the01}, \ref{the02} and \ref{the04} in Section~3. Finally, Section~4 closes the paper with remarks and open questions.

\section{Preliminaries}
Throughout this paper, unless stated otherwise, all graphs considered are undirected, finite, and simple. Let $G$ be a graph with vertex set $V(G)$ and edge set $E(G)$. If all pairs of vertices of a subgraph $H$ of $G$ that are adjacent in $G$ are also adjacent in $H$,
then $H$ is an \textit{induced subgraph}. We write $G[X]$ for the subgraph of $G$ induced by $X \subseteq V(G)$. 
The \textit{interval} $I_G(u,v)$ between two vertices $u$ and $v$ of a graph $G$ is the set of vertices on the shortest paths between $u$ and $v$. For $u,v\in V(G)$, the {\em distance} between $u$ and $v$ is defined as the length of a shortest $u,v$-path, denoted by $d_G(u,v)$. 
For a subgraph $H$, if $d_H(u,v) =d_G(u,v)$ for all $u,v \in V(H)$, we say $H$ is an \textit{isometric subgraph}.  
A graph $G$ is called {\em partial cube} if it is isomorphic to an isometric subgraph of $Q_n$ for some $n$.  When there is no ambiguity, we will drop the subscript $G$ of $I_G(u,v)$ and $d_G(u,v)$.

$G$ is called a {\em median graph} if for every three different vertices $u,v,w\in V(G)$, there exists exactly one vertex $x\in V(G)$ (maybe $x\in\{u,v,w\}$), called the {\em median} of $u,v,w$, satisfying that $d(u,x)+d(x,v)=d(u,v)$, $d(u,x)+d(x,w)=d(u,w)$ and $d(v,x)+d(x,w)=d(v,w)$.

For a graph $G$, the {\em simplex graph} $S(G)$, introduced by Bandelt and van de Vel \cite{bv89}, is the graph whose vertices are the cliques of $G$ (including the empty set), with two vertices being adjacent if, as cliques of $G$, they differ in exactly one vertex. Assume $V(G)=[n]$. Let $\mathcal{K}(G)$ be the set of clique of $G$, i.e., $\mathcal{K}(G)=V(S(G))$. Recall the one-to-one correspondence $\psi:\mathcal{K}(G)\to B^n$, which is defined as follows: For $u\in \mathcal{K}(G)$ and $\psi(u):=v=v_1v_2\cdots v_n\in B^n$, $i\in u\iff v_i=1$. For convenience, in the following text, we denote the vertex $u\in\mathcal{K}(G)$ as $\psi(u)$. 

For $u\in B^n$, the \textit{weight} of $u$ is the number of 1s in word $u$, denoted as $w(u)$. If $H$ is an induced $k$-cube in $Q_n$, it is well known that $H$ contains a unique vertex of maximum (resp.\ minimum) weight; see~\cite{SKM}.
More generally, if $H$ is an induced $k$-cube in a partial cube $G$, then for any $u\in V(G)$ there exists a unique vertex in $H$ at maximum (resp.\ minimum) distance to $u$.
We refer to this vertex as the {\em top vertex} (resp.\ {\em base vertex}) of $H$ with respect to the vertex $u$. 

The {\em Djokovi\'c-Winkler relation} (see \cite{dj73,w84}) $\Theta$ is a binary relation on $E(G)$ defined as follows: Let $e=uv$ and $f=xy$ be two edges in $G$, $e\,\Theta\,f\iff d(u,x)+d(v,y)\neq d(u,y)+d(v,x)$. Winkler \cite{w84} proved that a graph $G$ is a partial cube if and only if $G$ is bipartite and $\Theta$ is an equivalence relation on $E(G)$. 

Let $G$ be a partial cube. We call the equivalence class on $E(G)$ {\em $\Theta$-class}. For $e=uv\in E(G)$, we denote the $\Theta$-class containing $uv$ as $F_{uv}$, i.e., $F_{uv}:=\{f\in E(G)|f\,\Theta\,e\}$.  
Moreover, we denote $W_{uv}:=\{w\in V(G)|d_{G}(u,w)<d_{G}(v,w)\}$.

Let $G$ be a partial cube and not $K_1$, $F_{ab},F_{uv}$ two $\Theta$-classes of $G$. We say $F_{ab},F_{uv}$ {\em cross} if $W_{ab}\cap W_{uv}\neq\emptyset$, $W_{ba}\cap W_{uv}\neq\emptyset$, $W_{ab}\cap W_{vu}\neq\emptyset$ and $W_{ba}\cap W_{vu}\neq\emptyset$. The {\em crossing graph} of $G$ (see \cite{km02}), denote by $G^{\#}$, is the graph whose vertices are corresponding to the $\Theta$-classes of $G$, and $F_{ab},F_{uv}$ are adjacent in $G^{\#}$ if and only if they cross in $G$.

An {\em $i$-clique} of a graph $G$ is a subset of $V(G)$ of $i$ vertices whose induced subgraph is complete. Let's define $a_i(G)$ as the number of $i$-clique in $G$ for $i\geq 1$ and $a_0(G)=1$. The {\em clique polynomial} of $G$  is defined as follows, which is introduced by Hoede and Li \cite{hl94}.
\begin{equation*}
Cl_G(x):=\sum_{i\geq 0}a_i(G)x^i.
\end{equation*}

For a polynomial $P(x_1,x_2,\cdots,x_n)$, We denote the coefficient of $x_1^{d_1}x_2^{d_2}\cdots x_n^{d_n}$ in $P$ by $[x_1^{d_1}x_2^{d_2}\cdots x_n^{d_n}]P(x_1,x_2,\cdots,x_n)$.

\section{Proofs of the main results}
\subsection{Proofs of Theorems~\ref{the01} and \ref{the02}}
Before proceeding with the proofs of Theorems~\ref{the01} and \ref{the02}, we first present a surprisingly neat lemma, which provides a characterization of a daisy cube.

\begin{lemma}\label{lem03}
Let $G$ be a partial cube with $0^n\in V(G)$. Then $G$ is a daisy cube if and only if $G[I_G(0^n,v)] \cong Q_{w(v)} $ for all $v \in V(G)$.
\end{lemma}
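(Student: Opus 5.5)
Both directions rest on one fact about the embedding. Since $G$ is an isometric subgraph of $Q_n$ and $0^n\in V(G)$, we have $d_G(0^n,v)=d_{Q_n}(0^n,v)=w(v)$ for every $v\in V(G)$. Consequently
\[
I_G(0^n,v)=I_{Q_n}(0^n,v)\cap V(G)=\{u\in V(G)\mid u\preceq v\}.
\]
Indeed, $u$ lies on a shortest $0^n,v$-path in $G$ exactly when $d_G(0^n,u)+d_G(u,v)=d_G(0^n,v)$. Since distances in $G$ agree with Hamming distances, this is the same as $w(u)+d_{Q_n}(u,v)=w(v)$, which holds iff $u\preceq v$.

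For the forward direction, suppose $G=Q_n(X)$ is a daisy cube and $v\in V(G)$. Then $v\preceq x$ for some $x\in X$, and by transitivity every $u\preceq v$ is also a vertex of $G$. Hence $I_G(0^n,v)=\{u\in B^n: u\preceq v\}$. In $Q_n$ this set induces the subcube obtained by fixing the coordinates where $v_i=0$, which is isomorphic to $Q_{w(v)}$. Because $G$ is an induced subgraph of $Q_n$, we get $G[I_G(0^n,v)]\cong Q_{w(v)}$.

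For the converse, assume $G[I_G(0^n,v)]\cong Q_{w(v)}$ for all $v$. We want to show that $V(G)$ is a down-set under $\preceq$; then $G=Q_n(V(G))$ is a daisy cube, since a partial cube is an induced subgraph of $Q_n$. By the display above, $I_G(0^n,v)$ is contained in the down-set $\{u\in B^n: u\preceq v\}$, and this down-set has exactly $2^{w(v)}$ elements. The hypothesis says $I_G(0^n,v)$ also has $2^{w(v)}$ vertices. So the two sets coincide, and every $u\preceq v$ lies in $V(G)$, as required.

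The main care point is the identification of $I_G(0^n,v)$ with $\{u\in V(G): u\preceq v\}$, which needs the isometry of the embedding; everything else is a cardinality count. In particular, the converse only uses $|I_G(0^n,v)|=2^{w(v)}$, not the full isomorphism type.
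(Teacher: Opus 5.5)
Your proof is correct and follows the same route as the paper. The forward direction is identical, and the converse rests on the same idea: a missing $z\preceq v$ prevents $I_G(0^n,v)$ from being the full subcube below $v$. In fact your argument is tighter than the paper's: you use isometry to get $I_G(0^n,v)\subseteq\{u\in B^n: u\preceq v\}$ and then count $2^{w(v)}$ elements, which justifies a step the paper leaves implicit, namely passing from $G[I_G(0^n,v)]\cong Q_{w(v)}$ to $I_G(0^n,v)$ being exactly that subcube.
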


\begin{proof}
{\em Necessity (\(\Rightarrow\)).}
Since $G$ is a daisy cube, for each $v \in V(G)$, any binary string $z$ with $z\preceq v$ is contained in $V(G)$. Consequently, $I_G(0^n,v)=\{z:z\preceq v\}$. Thus, $G[I_G(0^n,v)] \cong Q_{w(v)}.$

{\em Sufficiency (\(\Leftarrow\)).}
If \(G\) is not a daisy cube, then there exists \(v \in V(G)\) such that some \(z \preceq v\) satisfies $z \notin V(G)$.
Consequently, $z \notin I_G(0^n, v)$, but $z$ is a vertex in $Q_{w(v)}$.
Hence, $ I_G(0^n, v)  \neq V(Q_{w(v)})$, a contradiction.
\end{proof}

\medskip
Now, we prove Theorems \ref{the01} and \ref{the02}.
\begin{proof}[\textbf{Proof of Theorem \ref{the01}}]
We establish the equivalence by verifying the necessary implications below.

(i)$\implies$(ii): The result follows from Proposition \ref{prop01}.
\smallskip

(ii)$\implies$(i):  By contradiction assume that $G$ is not a daisy cube. 

By the definition, $[x]C_G(x)=|E(G)|$. For $v\in V(G)$, set 
\[
e(v):=|\{e=vw\in E(G): d(0^n, v)>d(0^n, w)\}|.
\]
Then $|E(G)|=\sum\limits_{v\in V(G)}e(v)$ by the bipartiteness of $G$. Therefore
\[
[x]C_G(x)=\sum\limits_{v\in V(G)}e(v).
\] 

Now, we consider $[x]W_{G, 0^n}(x+1)$. By calculation, $[x]W_{G,0^n}(x+1)=\sum\limits_{d\geq 0}dw_d(G)$. Recall that $w_d(G)=|\{v\in V(G): d(v, 0^n)=d\}|$. So 
$[x]W_{G, 0^n}(x+1)=\sum\limits_{d\geq 0}dw_d(G)=\sum\limits_{v\in V(G)}d(0^n, v)=\sum\limits_{v\in V(G)}w(v)$. 
Note that the last equality holds because $G$ is a particial cube and recall $w(v)$ is the number of 1 in $v$. 

Since $G$ is an induced subgraph of $Q_n$, $e(v)\leq w(v)$ for any $v\in V(G)$. Again since $G$ is not a daisy cube, there exists a vertex $v$ satisfying  $e(v)<w(v)$. Therefore $[x]C_G(x)<[x]W_{G,0^n} (x+1),$ a contradiction.

\smallskip

(i)$\implies$(iii): The result follows from Proposition \ref{prop01}.
\smallskip

(iii)$\implies$(iv): By (iii), we have $D_{G,0^n}(x,y)=W_{G,0^n}(x+y)$ and $D_{G,0^n}(x+y-1, 1)=W_{G,0^n}(x+y)$. Therefore $D_{G,0^n}(x,y)=D_{G,0^n}(x+y-1, 1)$. Combined with Eq. \eqref{eq:D=C}, we have $D_{G,0^n}(x,y)=C_{G}(x+y-1)$, i.e., Eq. (iv), as desired.
\smallskip

(iv)$\implies$(i): Assume to the contrary that $G$ is not a daisy cube. By Lemma \ref{lem03}, there exists  $z \in V(G)$ (set $l:=w(z)$) such that $G[I_G(0^n, z)]$ $ \not\cong Q_{l}$ . We consider $[x^l]D_{G,0^n}(x,y)$ and $[y^l]D_{G,0^n}(x,y)$. By the definition of $D_{G,0^n}(x,y)$, $[x^l]D_{G,0^n}(x,y)(=[x^ly^0]D_{G,0^n}(x,y))$ is the number of the induced subgraphs of $G$ isomorphic to $Q_l$ containing vertex $0^n$, namely,  the size of $S:=\{v \in V(G)|w(v)=l\mbox{ and }G[I_G(0^n, v)]= Q_{l}\}$. Similarly,  $[y^l]D_{G,0^n}(x,y)(=[x^0y^l]D_{G,0^n}(x,y))$ is the number of vertices at distance $l$ from $0^n$, i.e., the size of $T:=\{v \in V(G)|w(v)=l\}$. Obviously, $S\subseteq T$ and $z\in T\setminus S$.  So we have $S\subsetneq T$. Thus, 

\begin{align}
[x^l]D_{G,0^n}(x, y)<[y^l]D_{G,0^n}(x, y). \label{eq:Dx<Dy}
\end{align}

However, by Eq. (iv), 
\begin{align}
[x^l]D_{G, 0^n}(x,y)=[x^l]C_G(x+y-1), \label{eq:Dx=Cx}
\end{align}
and 
\begin{align}
    [y^l]D_{G, 0^n}(x,y)=[y^l]C_G(x+y-1). \label{eq:Dy=Cy}
\end{align}
While, in $C_G(x+y-1)$,
\begin{align}
    [x^l]C_G(x+y-1)=[y^l]C_G(x+y-1). \label{eq:Cx=Cy}
\end{align}
Combined with Eqs. \eqref{eq:Dx=Cx}, \eqref{eq:Dy=Cy} and \eqref{eq:Cx=Cy}, we have 
\[[x^l]D_{G, 0^n}(x,y)=[y^l]D_{G, 0^n}(x,y),
\]
which is contradicted with Ineq. \eqref{eq:Dx<Dy}. Therefore, $G$ is a daisy cube.
\end{proof}

\begin{proof}[\textbf{Proof of Theorem \ref{the02}}]
The case of equality is an immediate consequence of Theorem \ref{the01}. It is sufficient to show that $D_{G, u}(x, y) \leq W_{G,u}(x+y)$.

By the definition of $D_{G, u}(x, y)$, $[x^ky^d]D_{G, u}(x, y)(=c_{k, d}(G))$ is the size of the set $A$ of $k$-cubes at distance $d$ from $u$ in $G$, one-to-one corresponding to the set of ordered pairs $(t, b)$, where $t, b$ are the top vertex and the base vertex of a $k$-cube in $A$, respectively. Note that, here $t, b$ satisfy that $d(u, b)=d, d(b, t)=k, d(u, t)=d(u, b)+d(b, t)=d+k$. For every top vertex $t$, there are no more than $\binom{d+k}{k}$ base vertices $b$ satisfying the above three equality, while the number of $t$ is no more than $w_{d+k}(G)$. Thus $|A|\leq w_{d+k}(G)\binom{d+k}{k}$, i.e., 
\begin{align}
c_{k, d}(G)\leq w_{d+k}(G)\binom{d+k}{k}. \label{eq:c(k,d)<wBinom}
\end{align}
Substituting Ineq. \eqref{eq:c(k,d)<wBinom} into $D_{G, u}(x, y)$, we obtain
\begin{align*}
    D_{G, u}(x, y)&\leq \sum_{k, d\geq 0} w_{d+k}(G)\binom{d+k}{k} 
    x^ky^d\\
    &=\sum_{d+k \geq 0}w_{d+k}(G)(x+y)^{d+k}\\
    &=W_{G, u}(x+y),
\end{align*}
as desired.
\end{proof}

\subsection{Proof of Theorem \ref{the04}}

Before presenting the proof of Theorem \ref{the04}, we need to introduce some terminology and lemmas, which originated from \cite{xfx24}.

Let $G$ be a graph. A set $A\subseteq V(G)$ is said to be {\em convex} if for all $u, v\in A$ the set $A$ contains all the vertices lied on a shortest path between $u$ and $v$ in $G$. Note that the intersection of  a finite number of convex sets is also convex. The {\em convex hull} $co_G(S)$ of a set $S\subseteq V(G)$ is defined as the smallest convex set containing $S$. Let $\mathcal{C}(G)$ be the set of all isometric cycles of $G$. We define a binary relation `$\leq_{\mathcal{C}(G)}$' on $\mathcal{C}(G)$ as follows: For any $C,C'\in\mathcal{C}(G)$, $C\leq_{\mathcal{C}(G)}C'\iff co_G(V(C))\subseteq co_G(V(C'))$. It's easy to see that `$\leq_{\mathcal{C}(G)}$' is a partial order on $\mathcal{C}(G)$. The set of maximal isometric cycles in $G$ is denoted by $\mathcal{C}_{\max}(G)$.

For a partial cube $G$ embedded into $Q_n$, we construct a new graph $G^+$ as described in Algorithm 1, which is adapted from \cite{xfx24}. Note that in this case if $C\in \mathcal{C}(G)$, then  $C\in \mathcal{C}(Q_n)$. It  is obvious that, being a partial cube, $G$ is an induced subgraph of $G^+$. About $G^+$, we have the following properties.

\begin{lemma}{\em\cite{xfx24}}\label{lem:G+Cro}
Let  $G(\neq K_1)$ be a partial cube, $G^+$ the graph constructed from Algorithm 1. Then
\begin{itemize}
\item[(1)] $G^{+}$ is a median graph;
\item[(2)] $G^{\#}=(G^+)^{\#}$.
\end{itemize}
\end{lemma}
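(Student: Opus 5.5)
Since Algorithm~1 (adapted from \cite{xfx24}) works by repeatedly replacing, for a maximal isometric cycle $C\in\mathcal{C}_{\max}(G)$, the set $V(C)$ by its hull $co_{Q_n}(V(C))$ in $Q_n$, I would argue by induction on the number of such replacement steps. The key observation is that an isometric cycle $C$ of length $2k$ in $Q_n$ uses exactly $k$ coordinates; call this set $S_C$. Then $co_{Q_n}(V(C))$ is the $k$-subcube in which the coordinates in $S_C$ vary freely and every coordinate outside $S_C$ is fixed at its common value on $C$. So each step adds only vertices whose nonconstant coordinates are already "active" in $G$. This should keep the set of $\Theta$-classes unchanged: the $\Theta$-classes correspond to the coordinates $i\in[n]$ on which $G$ is not constant. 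It should also keep the graph isometric in $Q_n$. For isometry I would take a new vertex $x$ in the cube over $C$ and any old vertex $w$. I would pick a vertex $c\in V(C)$ minimizing $d(c,w)$ over the cube, exploiting that $C$ meets every "face direction" of its hull, and concatenate a geodesic $x\to c$ inside the cube with an old geodesic $c\to w$. I expect this step to be the main technical obstacle, since I must check that no coordinate is flipped twice along the concatenated path. If it fails as stated, I would instead choose $c$ as the gate (projection) of $w$ onto the convex set $co_{Q_n}(V(C))\cap V(G)$.

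For (2), note first that the four quadrant conditions $W_{ab}\cap W_{uv}\neq\emptyset$, etc., are monotone under adding vertices. So $G^{\#}$ is a subgraph of $(G^+)^{\#}$, and both graphs have the same vertex set by the previous paragraph. For the converse, I would show that adding the cube over $C$ creates no new quadrant witness. Take coordinates $i,j$ and a new vertex $x$ in that cube. If $i,j\in S_C$, then the cycle $C$ itself already realizes all four value combinations of $(x_i,x_j)$, because $C$ is an isometric cycle through both classes. If $i\notin S_C$ and $j\in S_C$, then $x_i$ equals the constant value $c_i$ on $C$, and $C$ contains vertices with $x_i=c_i$ and either value of $x_j$. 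If $i,j\notin S_C$, then any vertex of $C$ has the same pair of values as $x$. In each case the quadrant containing $x$ is already nonempty in $G$, so crossings are unchanged, and induction gives $G^{\#}=(G^+)^{\#}$.

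For (1), I would use the known characterization that a partial cube is a median graph if and only if the convex hull of every isometric cycle is a hypercube (equivalently, $G$ is a median graph when every isometric cycle spans a convex cube). By the previous paragraphs, $G^+$ is a partial cube in $Q_n$. Termination of Algorithm~1 means that every maximal isometric cycle $C$ of $G^+$ already has $co_{Q_n}(V(C))\subseteq V(G^+)$, so $co_{G^+}(V(C))$ is that full subcube. A non-maximal isometric cycle $C'\leq_{\mathcal{C}(G^+)} C$ has its hull inside that cube, and it is then the subcube spanned by its own coordinates. Hence every isometric cycle has a cube as convex hull, and $G^+$ is median. The points I would double-check are that the algorithm terminates (it does, since $V(G^+)\subseteq B^n$ is finite and only grows) and that maximal cycles of the final graph are the relevant ones.
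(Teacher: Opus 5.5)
The paper does not prove this lemma; it imports it from \cite{xfx24}, so I am judging your proposal on its own. Your argument for (2) by the quadrant check is fine, and so is your argument for (1) via the characterization you quote (a partial cube is median iff every isometric cycle has a hypercube as convex hull). Both hold \emph{once} every $G^{(i)}$, and in particular $G^+$, is known to be isometric in $Q_n$. That is what identifies $\Theta$-classes with coordinates, $W_{ab}$ with coordinate half-spaces, and $co_{G^+}(V(C))$ with $V(G^+)\cap co_{Q_n}(V(C))$.

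The gap is the isometry step, and the per-cycle claim your induction needs is actually false. Algorithm~1 does not add one hull at a time. In each round it adds $co_{Q_n}(V(C))$ for \emph{all} $C\in\mathcal{C}_{\max}(G^{(i)})$ at once, and your argument never uses maximality or the other cycles. Here is a counterexample to the per-cycle claim.

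Take $V(G)=\{00000,10000,11000,11100,01100,00100,00001,00101,01101,11010,00011,01011,11011,01111\}\subseteq B^5$. One checks the following.
\begin{itemize}
\item[(a)] $G$ is isometric in $Q_5$.
\item[(b)] The $6$-cycle $C$ on the first six vertices is isometric, with $co_G(V(C))=V(C)$.
\item[(c)] $C$ is maximal. A strictly larger hull would need an isometric cycle using coordinates $1,2,3$ together with $4$ or $5$. But there is only one edge in direction $4$ with $x_5=0$, and the vertices with $x_4x_5=01$ induce a path of length $2$. Also $G$ has only four complementary pairs, so it has no isometric $10$-cycle.
\end{itemize}
Adding $co_{Q_5}(V(C))$ adds $01000$ and $10100$. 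Now $x=01000$ and $w=01011$ are at distance $2$ in $Q_5$, but both middle vertices $01010$ and $01001$ are missing, so isometry is lost.

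Both of your proposed choices fail here.
\begin{itemize}
\item[(i)] Picking $c\in V(C)$ fails because $x$ and $w$ agree on $S_C$ in the pattern $010$, which $C$ avoids. So no vertex of $C$ lies in $I_{Q_5}(x,w)$.
\item[(ii)] The gate fallback fails because $w$ has three nearest vertices on $C$, namely $00000,11000,01100$. So there is no gate; convex sets of partial cubes need not be gated.
\end{itemize}

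In this example Algorithm~1 survives only because $00000,00001,00011,01011,11011,11010,11000,10000$ is another maximal isometric cycle of $G$. Its hull $\{x_3=0\}$ supplies $01010$ and $01001$ in the same round.

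So the missing idea is an argument of the following kind. Let $x$ be a new vertex in the hull of one maximal cycle $C$, and let $w$ agree with $x$ on $S_C$; this is the only case where the cube over $C$ gives no progress. You need to show that some neighbour of $x$ closer to $w$ lies in $V(G^{(i)})$ or in the hull of some \emph{other} maximal cycle. You also need the analogous statement for two new vertices coming from different hulls. Without such an argument, the induction hypothesis ``$G^{(i)}$ is a partial cube in $Q_n$'' is unsupported, and both (1) and (2) rest on it.
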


\begin{algorithm}%[h]
  \caption{Constructing $G^+$  from a partial cube $G$}
  \begin{algorithmic}[1]
    \Require a partial cube $G$ embedded into $Q_n$ 
    \Ensure  a new graph $G^+$
    \State $G^{(0)}\leftarrow G$.
    \State $S^{(0)}\leftarrow V(G)$.
   \State $i \leftarrow 0$.
    \While{$i\geq 0$}
    	\State Compute $\mathcal{C}_{\max}(G^{(i)})$. 
    	\State $S_{i+1}\leftarrow S_i\cup\bigcup\limits_{C\in\mathcal{C}_{\max}(G^{(i)})}co_{Q_n}(V(C))$.
    	\If{$S_{i+1}=S_i$}
    		\State $G^+\leftarrow G^{(i)}$.
    		\Break
    	\EndIf
    	\State $G^{(i+1)}\leftarrow Q_n[S_{i+1}]$.
    	\State $i\leftarrow i+1$.
    \EndWhile
\end{algorithmic}
\end{algorithm}

Now, we prove Theorem \ref{the04}.
\begin{proof}[\textbf{Proof of Theorem \ref{the04}}]
By the discussion before Theorem \ref{the04}, it is sufficient to prove the sufficiency, i.e., to prove that if $G$ is a partial cube embedded into $Q_n$ with $0^n\in V(G)$ satisfying $W_{G,0^n}(x)=Cl_{G^{\#}}(x)$, then $G$ is a simplex graph. By Lemma \ref{lem04}, it suffices to show that $G$ is both a median graph and a daisy cube.

\begin{claim*}
    $G$ is a median graph.
\end{claim*}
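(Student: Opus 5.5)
The plan is to compare $G$ with the median graph $G^+$ produced by Algorithm 1, and to squeeze the hypothesis $W_{G,0^n}(x)=Cl_{G^{\#}}(x)$ between two inequalities that hold for every partial cube. By Lemma \ref{lem:G+Cro}, $G^+$ is a median graph with $(G^+)^{\#}=G^{\#}$. Applying Theorem \ref{prop02} to $G^+$ therefore gives
\[
C_{G^+}(x)=Cl_{G^{\#}}(x+1).
\]
Since $G$ is an induced subgraph of $G^+$, every induced $k$-cube of $G$ is an induced $k$-cube of $G^+$. Hence $C_G(x)\le C_{G^+}(x)$ coefficientwise, and the inequality is strict as soon as $G^+$ has a vertex outside $V(G)$, because the constant coefficients are $|V(G)|<|V(G^+)|$.

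Next I combine this with Corollary \ref{cor01} and the hypothesis, substituting $x+1$ for $x$:
\[
C_G(x)\le W_{G,0^n}(x+1)=Cl_{G^{\#}}(x+1)=C_{G^+}(x).
\]
Both sides are polynomials with nonnegative coefficients, so comparing constant terms alone is useful. The constant term of $W_{G,0^n}(x+1)$ is $W_{G,0^n}(1)=|V(G)|$, while the constant term of $C_{G^+}(x)$ is $|V(G^+)|$. The hypothesis forces these to be equal, so $|V(G)|=|V(G^+)|$. Because $V(G)\subseteq V(G^+)$ and $G$ is induced in $G^+$ (both are induced subgraphs of $Q_n$), this gives $G=G^+$. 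By Lemma \ref{lem:G+Cro}(1), $G$ is then a median graph, which proves the Claim.

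The step needing the most care is justifying that $G$ sits inside $G^+$ as an induced subgraph on a subset of its vertices. This holds because Algorithm 1 only enlarges the vertex set ($S_{i+1}\supseteq S_i$) and always takes subgraphs of $Q_n$ induced by those sets.

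For the remaining daisy-cube part (not required for this Claim), the same chain now reads $C_G(x)=Cl_{G^{\#}}(x+1)=W_{G,0^n}(x+1)$, using Theorem \ref{prop02} for the median graph $G$. Theorem \ref{the01}, (ii)$\Rightarrow$(i), then shows that $G$ is a daisy cube.
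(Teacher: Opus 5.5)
Your proposal is correct and follows the paper's argument: you combine Theorem~\ref{prop02} and Lemma~\ref{lem:G+Cro} with the hypothesis to get $C_{G^+}(x)=W_{G,0^n}(x+1)$, compare constant terms to obtain $|V(G^+)|=|V(G)|$, and conclude that $G=G^+$ is median. The extra inequalities you invoke ($C_G(x)\le C_{G^+}(x)$ and Corollary~\ref{cor01}) are harmless but unnecessary, since only that equality evaluated at $x=0$ is used.
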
 
Let $G^{+}$ be the graph from $G$ as presented in Algorithm 1. Combined Theorem \ref{prop02} and Lemma \ref{lem:G+Cro} (1), we have  
\begin{align}
    C_{G^+}(x)=Cl_{(G^+)^{\#}}(x+1). \label{eq:C+=Cl+}
\end{align}
By Lemma \ref{lem:G+Cro} (2),
\begin{align}
    Cl_{(G^+)^{\#}}(x+1)=Cl_{G^{\#}}(x+1). \label{eq:G+=G(Cl)}
\end{align}
Combined Eqs. (\ref{eq:C+=Cl+}), (\ref{eq:G+=G(Cl)}) with Condition $W_{G,0^n}(x)=Cl_{G^{\#}}(x)$,
\begin{align}
    C_{G^+}(x)=W_{G,0^n}(x+1). \label{eq:C+=W}
\end{align}
Substituting $x=0$ into Eq. (\ref{eq:C+=W}) gives 
$$C_{G^+}(0)=W_{G,0^n}(1).$$
By the definitions, $C_{G^+}(0)$ is the number of vertices in $G^+$, while $W_{G,0^n}(1)$ is the number of vertices in $G$. Combined with $G$ being an induced subgraph of $G^+$, $G=G^+$. Thus, $G$ is a median graph.
\begin{claim*}
    $G$ is a daisy cube.
\end{claim*}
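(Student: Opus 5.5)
We now know that $G$ is a median graph, so Theorem~\ref{prop02} gives $C_G(x)=Cl_{G^{\#}}(x+1)$. Combining this with the hypothesis $W_{G,0^n}(x)=Cl_{G^{\#}}(x)$, evaluated at $x+1$, we get
\begin{align*}
C_G(x)=Cl_{G^{\#}}(x+1)=W_{G,0^n}(x+1).
\end{align*}
This is exactly condition (ii) of Theorem~\ref{the01}. The implication (ii)$\implies$(i) of Theorem~\ref{the01} then says that $G$ is a daisy cube. Alternatively, the equality case of Corollary~\ref{cor01} gives the same conclusion.

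With both claims in hand, $G$ is a median graph and a daisy cube simultaneously. By Lemma~\ref{lem04}, $G$ is therefore a simplex graph, which proves the sufficiency in Theorem~\ref{the04}.

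The plan is thus a direct chaining of identities. The real content sits in the first claim, where passing to $G^+$ forces $G=G^+$. The only point to check for the second claim is that Theorem~\ref{prop02} requires $G\neq K_1$, and this is part of the hypothesis. I expect no real obstacle. The one subtlety is that Theorem~\ref{the01} is stated for the embedding with $0^n\in V(G)$, and this is the same normalization used here, so the distance polynomial is taken from the same base vertex $0^n$ throughout.
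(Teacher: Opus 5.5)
Your proposal is correct and matches the paper's own argument: once $G$ is known to be a median graph, Theorem~\ref{prop02} combined with the hypothesis gives $C_G(x)=Cl_{G^{\#}}(x+1)=W_{G,0^n}(x+1)$, and the implication (ii)$\implies$(i) of Theorem~\ref{the01} then shows that $G$ is a daisy cube. Your alternative via the equality case of Corollary~\ref{cor01} is also valid, though it is not independent, since that equality case is itself derived from Theorem~\ref{the01}.
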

Since $G$ is a median graph, $C_G(x)=Cl_{G^{\#}}(x+1)$ by Theorem \ref{prop02}. Combined with $Cl_{G^{\#}}(x+1)=W_{G,0^n}(x+1)$, we have $C_G(x)=W_{G,0^n}(x+1)$, which implies $G$ is a daisy cube by Theorem \ref{the01}.

Therefore, $G$ is a simplex graph by Lemma \ref{lem04}.
\end{proof}

\section{Conclusion and some remarks}
In the present paper, we obtained three equivalent characterizations of a daisy cube $G$ via any one of the related polynomial identities:
\begin{enumerate}
    \item[(a)] $C_{G}(x)=W_{G, 0^n}(x+1)$;
    \item[(b)] $D_{G, 0^n}(x, y)=W_{G, 0^n}(x+y)$;
    \item[(c)] $D_{G, 0^n}(x, y)=C_{G}(x+y-1)$.
\end{enumerate}
Moreover, we proved that inequalities
\begin{enumerate}
    \item[(d)]$C_{G}(x)\leq W_{G, u}(x+1)$
and
    \item[(e)] $D_{G, u}(x, y)\leq W_{G, u}(x+y)$
\end{enumerate}
hold for a partial cube $G$. Finally, we proved that the two upper bounds,  $W_{G, u}(x+1)$ (from inequality (d)) and $Cl_{G^{\#}}(x+1)$ (from  Ineq. \eqref{ine:crossing}), on $C_G(x)$ for a partial cube $G$, coincide if and only if $G$ is a simplex graph with $u=0^n$.

Compared (a), (b), (c) with (d), (e), a natural question arises: does an inequality between
$D_{G,u}(x,y)$ and $C_{G}(x+y-1)$ hold for partial cubes? We give a negative answer. 
For example, Let $G$ be the partial cube illustrated in Figure \ref{fig:counterexample}.
\begin{align*}
D_{G,u}(x,y) &= 3x^2 + 6xy + 3y^2 + 4x + 4y + 2y^3 + 4xy^2 + 2x^2y + 1, \\
C_{G}(x+y-1) &= 5x^2 + 10xy + 5y^2 + 4x + 4y + 1.
\end{align*}
In fact, the above two polynomials are incomparable.

\begin{figure}[!htbp]
    \centering

\begin{tikzpicture}[
    dot/.style={circle, fill=white, draw=black, inner sep=0pt, minimum size=4pt},
    line/.style={black, thick}
]

\node[dot] (A) at (-1.8, 1) {};
\node[dot] (B) at (-2.8, 0) {};
\node[dot] (C) at (-1.8, -1) {};

\node[dot] (D) at (-1, 0) {};

\node[dot] (E) at (0, 1) {};
\node[dot] (F) at (0, -1) {};

\node[dot] (G) at (1, 0) {};

\node[dot] (H) at (1.8, 1) {};
\node[dot] (I) at (2.8, 0) {};
\node[dot] (J) at (1.8, -1) {};

\node[label, below=1pt of F] {$u$};

\draw[line] (A) -- (B) -- (C);
\draw[line] (H) -- (I) -- (J);
\draw[line] (A) -- (E) -- (H);
\draw[line] (C) -- (F) -- (J);
\draw[line] (E) -- (D) -- (F);
\draw[line] (E) -- (G) -- (F);
\draw[line] (B) -- (D);
\draw[line] (G) -- (I);

\end{tikzpicture}
\caption{The partial cube $G$.}
    \label{fig:counterexample}
\end{figure}

According to Algorithm 1, we can obtain $G^+$ from a partial cube $G$ such that ($1$) $G^{+}$ is isomorphic to a median graph; ($2$) $G$ is an induced subgraph of $G^{+}$; ($3$) if $G$ is a median graph, then $G^{+}=G$, and further,
$$C_G(x)\leq C_{G^{+}}(x),$$
and the equality holds if and only if $G$ is a median graph.

Similarly, for a partial cube $G$ embedded in $Q_n$ and $u$ a given vertex in $V(G)$, we can also construct a graph $G'_u:=Q_n\left[\bigcup\limits_{v\in V(G)}I_{Q_n}(u,v)\right]$. It is obvious that ($1'$) $G'_u$ is isomorphic to a daisy cube; ($2'$) $G$ is an induced subgraph of $G'_u$; ($3'$) if $G$ is a daisy cube with $u=0^n$, then $G'_u=G$. Thus, for any partial cube $G$ with $u\in V(G)$,
$$C_G(x)\leq C_{G'_u}(x),$$
and the equality holds if and only if $G$ is a daisy cube and $u=0^n$.

A natural question arises: given a partial cube $G$, does there exist a graph $G^*$ obtained by adding fewer vertices to $G$ than in the constructions of the graph $G'_u$ or $G^+$, such that $G^*$ lies in a larger subclass of partial cubes containing both median graphs and daisy cubes? Before addressing this question, we first introduce a class of partial cubes: tope graphs of lopsided sets.

Let $X$ be a finite set. Denote $\mathrm{Sign}(X):=\{-1,1\}^X$, that is, the set of all maps from $X$ into $\{-1,1\}$. Let $\mathcal{S}$ be a subset of $\mathrm{Sign}(X)$ and $Y$ a subset of $X$. For $t\in\mathrm{Sign}(X-Y)$ and $s\in\mathrm{Sign}(X)$, if $s(e)=t(e)$ for all $e\in X-Y$, we say that $s$ is an {\em extension} of $t$. Denote 
$$\mathcal{S}^Y:=\{t\in \mathrm{Sign}(X-Y)|\mbox{every extension of }t\mbox{ belongs to }\mathcal{S}\},$$
and further,
$$\underline{\mathcal{X}}(\mathcal{S}):=\{Y\subseteq X|\mathcal{S}^Y\neq\emptyset\}.$$
Using this notation, $\mathcal{S}$ is called a {\em lopsided set (LOP)} if for every $A\subseteq X$, either $A\in\underline{\mathcal{X}}(\mathcal{S})$, or $X-A\in\underline{\mathcal{X}}(\mathrm{Sign}(X)-\mathcal{S})$ \cite{bcdk06,l83}.
The {\em tope graph} of an LOP $\mathcal{S}$ on a finite set $X$ is the graph whose vertex set is $\mathcal{S}$. Two vertices $f,g\in\mathcal{S}$ are adjacent if there exists exactly one $e\in X$ such that $f(e)\neq g(e)$ \cite{km20}. 
The tope graphs of LOPs are known to be partial cubes and the superclass of median graphs and daisy cubes \cite{bcdk06,l83,m18}.

With this definition of the tope graph of LOP in hand, we can now state the problem mentioned earlier.
\begin{problem}\label{prob02}
For any partial cube $G$, provide an algorithm for constructing a tope graph $G^*$ of an LOP  such that $G$ is an induced subgraph of $G^*$, and $G^*=G$ if and only if $G$ is a tope graph of an LOP.
\end{problem}

Finally, we consider another question. First, we give a definition. For two graphs $G$ and $H$, if $C_G(x)=C_H(x)$, we say $G$ and $H$ are  \textit{$C$-equivalent}. We would like to ask:
\begin{question}\label{que:Cequivalent}
For a given graph class $\mathscr{C}$, does there exist a subclass $\mathscr{C}'\subset \mathscr{C}$ such that for every $G\in\mathscr{C}$, there is $G'\in \mathscr{C}'$ such that $G$ is $C$-equivalent to $G'$?
\end{question}

We claim that if $\mathscr{C}$ is the class of median graphs, the answer of above question is `yes' and $\mathscr{C}'$ is the class of simplex graphs in this case.
\begin{theorem}\label{S=M}
For any median graph $G$, there exists a simplex graph $G'$ such that $C_G(x)=C_{G'}(x)$.
\end{theorem}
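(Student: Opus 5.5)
The plan is to take $G' := S(G^{\#})$, the simplex graph of the crossing graph of $G$, and to show that
\[
C_{S(H)}(x) = Cl_H(x+1) \quad \text{for every graph } H.
\]
Since $G$ is median, Theorem~\ref{prop02} gives $C_G(x)=Cl_{G^{\#}}(x+1)$, so $C_G(x)=C_{S(G^{\#})}(x)$ follows at once. The degenerate case $G=K_1$, which Theorem~\ref{prop02} excludes, is handled separately by taking $G'=K_1=S(K_0)$; here $K_0$ is the graph with no vertices, whose only clique is $\emptyset$.

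For the identity, let $V(H)=[n]$ and regard $S(H)$ as the induced subgraph of $Q_n$ on $\psi(\mathcal{K}(H))$. Since every subset of a clique is a clique, this vertex set is closed downward under $\preceq$. Hence $S(H)$ is a daisy cube with $0^n$ (the empty clique) as a vertex, and in particular it is a partial cube. (Alternatively, one could simply invoke Proposition~\ref{prop01}(a) for this daisy cube, but a direct count is just as short.)

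The direct count goes through the top and base vertices. Every induced $k$-cube $H'$ of $S(H)$ is an induced $k$-cube of $Q_n$, so it has a unique top vertex $K$ and base vertex $B$ (with respect to $0^n$). Moreover $V(H')=\{z: B\preceq z\preceq K\}$, that is, $H'$ is determined by the pair $B\subseteq K$ with $|K\setminus B|=k$. Conversely, for any clique $K$ of $H$ and any $B\subseteq K$ with $|K\setminus B|=k$, every $z$ with $B\preceq z\preceq K$ is a subset of $K$ and hence a clique. So the whole interval lies in $S(H)$ and induces a copy of $Q_k$. This gives a bijection, and therefore
\[
c_k(S(H))=\sum_{K\in\mathcal{K}(H)}\binom{|K|}{k}=\sum_{i\ge 0} a_i(H)\binom{i}{k}=[x^k]\,Cl_H(x+1).
\]

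The main point requiring care is the claim that an induced $k$-cube of $Q_n$ coincides exactly with the interval between its base and top vertices in $\preceq$. This is the standard fact already cited before the statement: a subcube of $Q_n$ fixes $n-k$ coordinates and lets $k$ coordinates vary freely, so its minimum and maximum are $B$ and $K$, and everything in between belongs to it. I would state this explicitly in the proof.

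Everything else is bookkeeping. An optional check is $(S(H))^{\#}\cong H$, which would let one apply Theorem~\ref{prop02} to $S(H)$ directly, since simplex graphs are median by Lemma~\ref{lem04}. The direct count above makes this check unnecessary.
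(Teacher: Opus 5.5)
Your proof is correct and uses the same witness $G'=S(G^{\#})$ as the paper, but it reaches $C_G(x)=C_{G'}(x)$ through a different key step.

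The paper works entirely with crossing graphs. It uses Lemma~\ref{lem:S(G)=G} to get $(S(G^{\#}))^{\#}=G^{\#}$ and notes that the simplex graph $S(G^{\#})$ is median. It then applies Lemma~\ref{lem:C=C}: median graphs with isomorphic crossing graphs have equal cube polynomials, which follows at once from the equality case of Theorem~\ref{prop02} applied to both graphs.

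You apply Theorem~\ref{prop02} only to $G$. You then compute $C_{S(H)}(x)=Cl_H(x+1)$ directly, by identifying induced $k$-cubes of the downward-closed family $\psi(\mathcal{K}(H))$ with pairs of cliques $B\subseteq K$ with $|K\setminus B|=k$. The subcube fact you flag is sound: any copy of $Q_k$ in $Q_n$ is a face, so it equals the interval between its base and top vertices.

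The paper's route is two lines, but it relies on identifying the $\Theta$-classes and crossings of $S(H)$, and on simplex graphs being median. Your route is self-contained apart from Theorem~\ref{prop02} and needs neither fact. It also gives an explicit formula for the cube polynomial of every simplex graph; as you note, this is Proposition~\ref{prop01}(a) combined with $W_{S(H),0^n}(x)=Cl_H(x)$. Finally, you handle $G=K_1$ explicitly via $S(K_0)=K_1$. The paper's proof skips this case, even though Lemma~\ref{lem:C=C} and Theorem~\ref{prop02} both exclude $K_1$.
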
 
 Before we proceed to prove Theorem \ref{S=M}, we first present two lemmas that will be used in the proof.
\begin{lemma}[\cite{xx25}]\label{lem:S(G)=G}
    For any graph $G$, $G=(S(G))^\#.$
\end{lemma}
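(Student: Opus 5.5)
The plan is to identify the $\Theta$-classes of $S(G)$ with the vertices of $G$, and then show that two $\Theta$-classes cross exactly when the corresponding vertices of $G$ are adjacent. Take $V(G)=[n]$ and embed $S(G)$ into $Q_n$ via $\psi$, so that each clique $K$ becomes its characteristic vector. Two steps come first.

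First, we check that this embedding is isometric, so that $S(G)$ is a partial cube. For cliques $K,K'$, the Hamming distance is $|K\triangle K'|$. There is a path of exactly that length: delete the elements of $K\setminus K'$ one at a time, passing through subsets of $K$, which are cliques, down to $K\cap K'$. Then add the elements of $K'\setminus K$ one at a time, passing through subsets of $K'$. Every vertex on this path is a clique, so graph distance equals Hamming distance.

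Second, we identify the $\Theta$-classes. In an isometric subgraph of $Q_n$, two edges are in relation $\Theta$ exactly when they flip the same coordinate. Every coordinate $i\in[n]$ is flipped by the edge $\emptyset\,\{i\}$, since singletons are cliques. Hence the $\Theta$-classes are $F_i$, $i\in[n]$, and $V(S(G)^\#)$ is in bijection with $V(G)$. Moreover, $W$ and its complement for $F_i$ are the sets of cliques not containing $i$ and containing $i$, respectively.

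Next we characterise crossing. $F_i$ and $F_j$ (with $i\neq j$) cross iff all four sets of cliques are nonempty: those containing neither of $i,j$, those containing $i$ but not $j$, those containing $j$ but not $i$, and those containing both. The first three are always nonempty, witnessed by $\emptyset$, $\{i\}$ and $\{j\}$. The fourth is nonempty iff some clique contains both $i$ and $j$, which happens iff $ij\in E(G)$ (and then $\{i,j\}$ itself is a witness). So $i\mapsto F_i$ is an isomorphism $G\cong S(G)^\#$.

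The only delicate step is the identification of $\Theta$ with coordinates, together with the requirement that every coordinate actually occurs. The first part is standard for isometric subgraphs of hypercubes: if $uv$ flips coordinate $i$ and $xy$ flips coordinate $j$, then $d(u,x)+d(v,y)-d(u,y)-d(v,x)$ is nonzero iff $i=j$. The second part is guaranteed by the singletons, and the whole argument rests on the isometry established in the first step.
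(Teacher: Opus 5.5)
Your proof is correct and complete. The paper does not prove this lemma at all: it is quoted from \cite{xx25} and used only as a black box in the proof of Theorem~\ref{S=M}, so there is no in-paper argument to compare against. Your direct verification is a natural self-contained proof. It has three steps: the isometric embedding of $S(G)$ via characteristic vectors, the identification of the $\Theta$-classes with the coordinates $F_1,\dots,F_n$, and the observation that of the four sets of cliques determined by $i$ and $j$, only the one containing both can be empty, and it is nonempty iff $ij\in E(G)$.

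Three small points you use implicitly are worth stating. First, the classes $F_i$ are pairwise distinct, because edges flipping different coordinates are not $\Theta$-related. Second, $S(G)\neq K_1$ whenever $V(G)\neq\emptyset$, which the paper's definition of the crossing graph requires. Third, the equality $G=(S(G))^\#$ is meant up to the isomorphism $i\mapsto F_i$.
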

\begin{lemma}[\cite{xx25}]\label{lem:C=C}
    Let $G$, $H$ be two median graphs and $G, H\neq K_1$. If $G^\#\cong H^\#$, then
    $C_G(x)=C_H(x).$
\end{lemma}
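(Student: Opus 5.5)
The plan is to derive the lemma directly from Theorem~\ref{prop02}, which already gives the cube polynomial of a median graph in terms of its crossing graph. The argument therefore reduces to observing that the clique polynomial is an isomorphism invariant. No structural analysis of the median graphs themselves should be needed.

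First, since $G$ is a median graph, it is in particular a partial cube, and $G\neq K_1$ by hypothesis. So Theorem~\ref{prop02} applies, and its equality case yields
\[
C_G(x)=Cl_{G^{\#}}(x+1).
\]
The same reasoning applied to $H$ gives
\[
C_H(x)=Cl_{H^{\#}}(x+1).
\]
The hypothesis $G,H\neq K_1$ is used exactly here, because Theorem~\ref{prop02} is stated only for partial cubes other than $K_1$.

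Second, let $\varphi\colon V(G^{\#})\to V(H^{\#})$ be an isomorphism. It maps $i$-cliques bijectively onto $i$-cliques, so $a_i(G^{\#})=a_i(H^{\#})$ for every $i\geq 1$; also $a_0=1$ for both graphs. Hence $Cl_{G^{\#}}(x)=Cl_{H^{\#}}(x)$ as polynomials. Substituting $x+1$ for $x$ preserves this equality, and chaining it with the two displayed identities gives $C_G(x)=C_H(x)$.

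I do not expect any real obstacle. The only point that requires care is confirming that the hypotheses of Theorem~\ref{prop02} hold for both $G$ and $H$, namely that each is a partial cube different from $K_1$. This is also why the lemma excludes $K_1$: $K_1$ has no $\Theta$-classes, so its crossing graph is empty and Theorem~\ref{prop02} does not cover it.
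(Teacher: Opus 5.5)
Your argument is correct: median graphs are partial cubes, so the equality case of Theorem~\ref{prop02} gives $C_G(x)=Cl_{G^{\#}}(x+1)$ and $C_H(x)=Cl_{H^{\#}}(x+1)$, and the clique polynomial is invariant under isomorphism. The paper gives no proof of this lemma and only quotes it from \cite{xx25}; your derivation shows it is an immediate corollary of Theorem~\ref{prop02}, used the same way the paper itself invokes that theorem in the proof of Theorem~\ref{the04}.
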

With these lemmas in hand, we  provide the proof of Theorem \ref{S=M}.

\begin{proof}[Proof of Theorem \ref{S=M}]
Let $G$ be a median graph.  By Lemma \ref{lem:S(G)=G}, 
$$G^{\#}=\left(S\left(G^{\#}\right)\right)^{\#}.$$
Set $G'=S\left(G^{\#}\right)$,  namely, a simplex graph  (therefore of  a median graph). 
%Then $$G^{\#}=\left(G'\right)^{\#}.$$
By Lemma \ref{lem:C=C}, we obtain $C_G(x)=C_{G'}(x)$.
\end{proof}

By Lemma \ref{lem04}, the class of simplex graphs is the intersection of median graphs and daisy cubes. If $\mathscr{C'}$ in Question \ref{que:Cequivalent} is the class of daisy cubes, can we obtain a generalized version of Theorem \ref{S=M}? For example, we can propose the following problem.

\begin{problem}\label{pro02}
For any tope graph $G$ of LOP, does there exist a daisy cube $G'$ satisfying that
$$C_G(x)=C_{G'}(x)?$$
\end{problem}

\end{document}